\newtheorem{thm}{Theorem}[section]
\newtheorem{prop}{Proposition}[section]
\newtheorem{lem}{Lemma}[section]
\newtheorem{cor}{Corollary}[section]
\newtheorem{defi}{Definition}[section]
\newtheorem{rem}{Remark}[section]
\begin{document}

\title{Distributionally chaotic maps are $C^0$-dense}
\author{Noriaki Kawaguchi}
\subjclass[2010]{74H65; 37C50; 37B40}
\keywords{distributional chaos; topological manifold; limit shadowing; entropy}
\address{Graduate School of Mathematical Sciences, The University of Tokyo, 3-8-1 Komaba Meguro, Tokyo 153-8914, Japan}
\email{knoriaki@ms.u-tokyo.ac.jp}

\maketitle

\markboth{NORIAKI KAWAGUCHI}{Distributionally chaotic maps are $C^0$-dense}

\begin{abstract}
We prove that the set of maps which exhibit distributional chaos of type 1 (DC1) is $C^0$-dense in the space of continuous self-maps of given any compact topological manifold (possibly with boundary). 
\end{abstract}

\section{Introduction}

Throughout this paper, $X$ denotes a compact metric space endowed with a metric $d$. We denote by $C(X)$ the space of continuous self-maps of $X$, equipped with the metric $d_{C^0}$ defined by \[d_{C^0}(f,g)=\sup_{x\in X}d(f(x),g(x))\] for $f,g\in C(X)$. The aim of this paper is to prove the following theorem.

\begin{thm}
Let $M$ be a compact topological manifold (possibly with boundary). Then, the set of maps which exhibit distributional chaos of type 1 (DC1) is dense in $C(M)$.
\end{thm}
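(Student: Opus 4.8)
It suffices to show: for every $f\in C(M)$ and $\varepsilon>0$ there is $g\in C(M)$ exhibiting DC1 with $d_{C^0}(f,g)<\varepsilon$. We may assume $n:=\dim M\ge 1$ (for $n=0$, $M$ is finite and there is nothing to discuss). Since DC1 asks only for one uncountable scrambled set with a common separation constant, the plan is to perturb $f$ slightly so as to graft a ``distributionally chaotic plug'' onto an arbitrarily small embedded $n$-ball; the manifold hypothesis enters only through the existence of Euclidean charts for this surgery (near $\partial M$ one works inside a collar). The ball will sit on a return structure: by compactness the forward $f$-orbit of any point eventually comes within $\eta$ of an earlier point, which yields a periodic $\eta$-pseudo-orbit of $f$; choosing one of minimal length gives $q\ge1$ \emph{pairwise distinct} points $p_0,\dots,p_{q-1}$ with $d(f(p_i),p_{i+1})<\eta$ for all $i$ (indices mod $q$).

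For the plug, fix a continuous $\phi\colon[0,1]\to[0,1]$ with $h_{\mathrm{top}}(\phi)>0$ (e.g.\ a map with a horseshoe) and realize a copy of it inside a ball: with $B^n\subseteq\mathbb R^n$ the closed unit ball and $A=[-\tfrac13,\tfrac13]\times\{0\}^{n-1}$, let $h\colon B^n\to B^n$, $h(x_1,\dots,x_n)=(u(x_1),0,\dots,0)$, where $u\colon[-1,1]\to[-\tfrac13,\tfrac13]$ is continuous and restricts on $[-\tfrac13,\tfrac13]$ to an affine conjugate of $\phi$. Then $h$ is continuous, $h(B^n)\subseteq\operatorname{int}B^n$, the arc $A$ is $h$-invariant, and $h|_A$ is conjugate to $\phi$. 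Now choose pairwise disjoint embedded closed $n$-balls $D_i\ni p_i$, so small (and $\eta$ so small) that $f(\overline{D_i})$ lies in a prescribed tiny neighbourhood of $p_{i+1}$ for each $i$, with charts $\psi_i\colon B^n\to D_i$, and define $g$ by: $g=f$ off $\bigcup_iD_i$; on a small concentric sub-ball of $D_i$ for $0\le i\le q-2$, $g$ is the $\psi$-image of a linear contraction mapping into $D_{i+1}$; on a small concentric sub-ball of $D_{q-1}$, $g$ is chosen so that the resulting $q$-fold return map $g^{\,q}$ on a sub-ball of $D_0$ equals $\psi_0\circ h\circ\psi_0^{-1}$; and on the remaining annuli $g$ interpolates continuously between these prescriptions and the forced boundary values $f|_{\partial D_i}$ (possible since all images in question lie in one small contractible ball about $p_{i+1}$, resp.\ $p_0$). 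For $\eta$ and the $D_i$ small enough, $d_{C^0}(f,g)<\varepsilon$; moreover $\bigcup_iD_i$ is forward $g$-invariant, the arc $\widehat A:=\psi_0(A)$ is $g^{\,q}$-invariant, and $g^{\,q}|_{\widehat A}$ is conjugate to $\phi$.

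Finally, by the Schweizer--Sm\'\i tal theorem a positive-entropy interval map exhibits DC1: there are an uncountable $S_\phi\subseteq[0,1]$ and $\delta_0>0$ such that any two distinct points of $S_\phi$ form a DC1-scrambled pair for $\phi$ with constant $\delta_0$. Transporting $S_\phi$ into $\widehat A$ yields an uncountable $S\subseteq M$; since $g^{\,q}|_{\widehat A}$ is conjugate to $\phi$ and the $g$-orbits of points of $S$ stay in $\bigcup_iD_i$, every pair of distinct points of $S$ is DC1-scrambled for $g^{\,q}$ with a fixed constant. As the maps $g^{\,j}|_{\widehat A}$ for $0\le j<q$ are $L$-bi-Lipschitz for one common $L$ (compositions of the charts $\psi_i$ and linear contractions), comparing the distributional statistics over blocks of $q$ consecutive times with those over multiples of $q$ upgrades this to: every pair of distinct points of $S$ is DC1-scrambled for $g$ itself, with constant $\delta_0L^{-(q-1)}$. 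Thus $g$ exhibits DC1.

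I expect the main difficulty to be the surgery of the second paragraph --- the continuous gluing of the model $h$ to $f$ across each $\partial D_i$ while keeping $d_{C^0}(f,g)<\varepsilon$, which is standard but delicate --- together with the point in the third paragraph that the ``far apart with upper density one'' half of DC1 survives the passage from $g^{\,q}$ to $g$, secured precisely by the uniform bi-Lipschitz bound. (Instead of the interval-map input one may use a plug carrying the limit shadowing property together with positive entropy and invoke the implication that these force DC1; the perturbation, gluing, and transfer steps are unchanged, and this is presumably the route behind the paper's keywords.)
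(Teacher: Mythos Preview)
Your argument is essentially correct but follows a route quite different from the paper's. The paper does not perform any explicit surgery on $M$; instead it deduces Theorem~1.1 from Theorem~1.2 together with two quoted results: Mazur--Oprocha's theorem that s-limit shadowing (hence limit shadowing) is $C^0$-dense in $C(M)$, and Yano's theorem that $\{f:h_{\mathrm{top}}(f)\ge K\}$ contains an open dense set for every $K>0$. Intersecting a dense set with an open dense set is dense, and by Theorem~1.2 every map in the intersection exhibits DC1. All the work in the paper goes into Theorem~1.2, which is proved via a chain relation $\sim$ on $CR(f)$, the structure of entropy pairs, and a pseudo-orbit coding over $\{0,1\}^{\mathbb N}$ that is then limit-shadowed.

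Your approach, by contrast, is concrete and self-contained: locate a short periodic pseudo-orbit, implant disjoint chart-balls along it, and glue in a plug whose $q$-th return carries a conjugate copy of a positive-entropy interval map; then invoke Schweizer--Sm\'ital. This avoids both the Mazur--Oprocha density result and the limit-shadowing machinery, at the cost of the (indeed standard but fiddly) interpolation across the $\partial D_i$. What the paper's route buys, and yours does not, is Theorem~1.2 itself---an implication of independent interest---and the sharper inclusion $E(X,f)\subset\overline{DC1(X,f)}$.

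One small correction: on a \emph{topological} manifold your charts $\psi_i$ are mere homeomorphisms, so the maps $g^{j}|_{\widehat A}$ for $0\le j<q$ need not be bi-Lipschitz in the metric $d$. What you actually have (and what your transfer from $g^{q}$ to $g$ really uses) is that each $g^{j}|_{\widehat A}$ is a homeomorphism onto its image; by compactness both it and its inverse are uniformly continuous, which already yields a uniform $\delta_0'>0$ for the ``far apart'' half and handles the ``close'' half as well. With that adjustment your third paragraph goes through. Your closing parenthetical is exactly the paper's strategy.
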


The notion of {\em distributional chaos} was introduced by Schweizer and Sm\'ital \cite{SS}. For a continuous map $f:X\to X$, a pair of points $(x,y)\in X^2$ is said to be a {\em DC1-pair} for $f$ if
\begin{equation*}
\limsup_{n\to\infty}\frac{1}{n}|\{0\le i\le n-1:d(f^i(x),f^i(y))<\delta\}|=1\quad\text{for every $\delta>0$},
\end{equation*}
and
\begin{equation*}
\limsup_{n\to\infty}\frac{1}{n}|\{0\le i\le n-1:d(f^i(x),f^i(y))>\delta_0\}|=1\quad\text{for some $\delta_0>0$}.
\end{equation*}
Then, $f$ is said to exhibit {\em distributional chaos of type 1} (DC1) if there is an uncountable set $S\subset X$ such that  for any $x,y\in S$ with $x\ne y$, $(x,y)$ is a DC1-pair for $f$. Indeed, the distributional chaos has two more versions: DC2 and DC3. They are numbered in the order of decreasing strength, so DC1 is the strongest, and DC2 is stronger than Li-Yorke chaos.

Hereafter, $M$ denotes a compact topological manifold (possibly with boundary). For any $f\in C(X)$, we denote by $h_{top}(f)$ the topological entropy of $f$. In \cite{Y}, it was proved by Yano that a generic $f\in C(M)$ satisfies $h_{top}(f)=\infty$. Downarowicz proved that for any $f\in C(X)$, $h_{top}(f)>0$ implies DC2 \cite{D}. These together imply that DC2 is generic in $C(M)$. Then, it is natural to ask whether DC1 is still generic in $C(M)$ or not. When $M$ is the unit interval (denoted by $I$), we know that for any $f\in C(I)$, $h_{top}(f)>0$ iff $f$ exhibits DC1 \cite{SS}; therefore, DC1 is generic in $C(I)$. Theorem 1.1 gives a partial answer to the question for a general $M$. It should be noticed that Piku\l a showed that $h_{top}(f)>0$ does not necessarily imply DC1 for any $f\in C(X)$ \cite{Pik}. This indicates that some additional assumptions besides positive topological entropy are needed to ensure DC1 for general continuous maps.

{\em Shadowing} is a natural candidate for such an assumption. Recently in \cite{LLT}, Li {\it et al.} proved that for any $f\in C(X)$ with the shadowing property, $f$ exhibits DC1 if one of the following properties holds: (1) $f$ is non-periodic transitive and has a periodic point, or (2) $f$ is non-trivial weakly mixing. Here, note that we have $h_{top}(f)>0$ in both cases. According to Mazur and Oprocha \cite{MO}, we know that the shadowing property is generic in $C(M)$. However, because of the additional assumption (1) or (2), it is not obvious that these results imply the genericity of DC1.

The {\em limit shadowing property} introduced by Eirola {\it et al.} in \cite{ENP} is a variant of the shadowing property defined as follows. Given a continuous map $f:X\to X$, a sequence $(x_i)_{i\ge0}$ of points in $X$ is a {\em limit pseudo orbit} of $f$ if $\lim_{i\to\infty}d(f(x_i),x_{i+1})=0$. Then, $f$ is said to have the {\em limit shadowing property} if for any limit pseudo orbit  $(x_i)_{i\ge0}$ of $f$, there is $x\in X$ such that $\lim_{i\to\infty}d(f^i(x),x_i)=0$, where such $x$ is called a {\em limit shadowing point} of $(x_i)_{i\ge0}$. The property provides a method of gaining information about the asymptotic behavior of the true orbits from pseudo-orbits of continuous maps. Then, we prove the following theorem.

\begin{thm}
Suppose that a continuous map $f:X\to X$ satisfies the following properties. 
\begin{itemize}
\item[(1)] The limit shadowing property.
\item[(2)] $h_{top}(f)>0$.
\end{itemize}
Then, $f$ exhibits distributional chaos of type 1 (DC1). Moreover, $E(X,f)\subset\overline{DC1(X,f)}$, where $E(X,f)$ (resp. $DC1(X,f)$) is the set of entropy pairs (resp. DC1-pairs) for $f$.
\end{thm}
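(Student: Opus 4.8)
The plan is to reduce both assertions to one construction: given an entropy pair $(x,y)$ of $f$, produce DC1-pairs arbitrarily close to $(x,y)$ and, by the same device, an uncountable set all of whose pairs are DC1. The inputs are standard. Since $h_{top}(f)>0$ there is an ergodic measure $\mu$ with $h_\mu(f)>0$, so $\mu$ is non-atomic and $E(X,f)\neq\emptyset$; topological entropy is concentrated on a chain component $C$ with $h_{top}(f|_C)>0$; and, entropy pairs being exactly the non-diagonal IE-pairs, for any entropy pair $(x,y)$ and disjoint closed neighbourhoods $U\ni x$, $V\ni y$ there is $\Lambda\subseteq\mathbb{Z}_{\ge 0}$ of positive upper density along which a genuine orbit can be prescribed, independently, to sit in $U$ or in $V$; in particular there are orbit pairs visiting $U\times V$ at arbitrarily large times. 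Limit shadowing will be used thus: any sequence obtained by splicing finite genuine orbit segments so that the jump from the end of one to the start of the next tends to $0$ is a limit pseudo orbit, hence is limit-shadowed; and when two such sequences eventually coincide, or merely agree on a block $[m,m')$ with $m\to\infty$, their limit-shadowing points are asymptotic, resp.\ become uniformly close on that block.

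I would then build two limit pseudo orbits $(z_i),(z_i')$ from blocks $[m_k,m_{k+1})$ of rapidly growing length, of two kinds. On a \emph{coherence} block set $z_i=z_i'$, both following one genuine orbit segment in $C$; the limit-shadowing points $p,q$ satisfy $d(f^ip,f^iq)\to 0$ uniformly on the block once $k$ is large, which — with fast enough growth of the lengths — forces the proximality clause of DC1 along $n=m_{k+1}$. On a \emph{dispersion} block let $(z_i,z_i')$ follow an orbit pair of $f\times f$ staying at distance $\ge\delta_0$ on a fraction of the block tending to $1$ (for one fixed $\delta_0>0$), capped by a short $f\times f$-pseudo orbit of vanishing step error that returns $(z_i,z_i')$ near the diagonal, so that the block glues onto the next coherence block with error $\to 0$ (and there $q$'s pseudo orbit is forced equal to $p$'s at one step, at a cost controlled by the modulus of continuity of $f$). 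Fast growth then forces the separation clause along $n=m_{k+1}$ over dispersion blocks, so $(p,q)$ is a DC1-pair. To land within $\varepsilon$ of the prescribed $(x,y)$, arrange some dispersion blocks (or short insertions) to pass through $U\times V$ at times $n_j\to\infty$ via the IE-structure at $(x,y)$; since $d(f^ip,z_i)\to 0$, some $n_j$ is a shadowing time accurate to $\varepsilon$, and $(f^{n_j}p,f^{n_j}q)$, still DC1 by shift-invariance, lies within $\varepsilon$ of $(x,y)$. For the uncountable scrambled set I would alternate parameter-independent coherence blocks with parameter-dependent blocks that either copy a fixed baseline or follow a fixed block-indexed orbit pair far from it, indexed so that distinct parameters disagree on infinitely many blocks of fast growth; this gives a Cantor set of pairwise DC1 points.

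The real obstacle is the dispersion block — keeping two orbits at a \emph{fixed} distance $\ge\delta_0$ on a density-one subset of a long window and then reconverging. Positive entropy alone supplies only the positive upper-density separation of an IE-pair, which is exactly what yields DC2 rather than DC1 in Downarowicz's argument, and a proximal system of positive entropy admits no $f\times f$-invariant measure off the $\delta_0$-diagonal; so the density-one separation has to come from the interplay of positive entropy with limit shadowing — one expects limit shadowing, inside the positive-entropy chain component $C$, to yield the recurrence/periodicity and chain-mixing needed to hold two orbits uniformly $\delta_0$-apart for arbitrarily long and then merge them. Subsidiary difficulties are the cyclic structure of $C$, which cannot simply be removed by replacing $f$ with $f^p$ (DC1 is not manifestly inherited from $f^p$ to $f$ at the intermediate times), and the bookkeeping guaranteeing that all junction errors and forced gluings really tend to $0$.
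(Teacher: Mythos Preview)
You have correctly located the crux --- producing ``dispersion blocks'' along which two pseudo-orbits stay uniformly $\ge\delta_0$ apart on a fraction of the block tending to $1$ --- and you rightly observe that IE-structure alone only yields positive-density separation, which is exactly the DC2/DC1 gap. But the proposal stops at ``one expects limit shadowing \dots\ to yield the recurrence/periodicity and chain-mixing needed''; that sentence names the missing idea rather than supplying it, so as written there is a genuine gap.

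The paper's resolution is concrete and goes through a step you do not have. First, limit shadowing forces $f|_{CR(f)}$ to have the \emph{ordinary} shadowing property (Lemma~4.2, quoted from an earlier paper); this is the bridge that turns your ``one expects'' into something usable. Second, every entropy pair $(x,y)$ lies in $CR(f)^2$ and satisfies a chain relation $x\sim y$ (Lemma~3.3 --- your IE-pair sketch is essentially one of the paper's two proofs of this). Shadowing on $CR(f)$ together with $x\sim y$ then manufactures a factor map $\pi:(Y,f^a)\to(\{0,1\}^{\mathbb N},\sigma)$ with $\pi^{-1}(0^\infty)\subset B_\epsilon(x)$ and $\pi^{-1}(1^\infty)\subset B_\epsilon(y)$. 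Now lift the \emph{distal} entropy pair $(0^\infty,1^\infty)$ of the shift to $(p,q)\in E(X,f)$ and pass to $(z,w)\in\omega((p,q),f\times f)$: this pair satisfies $z\sim w$ together with ``property*'', meaning that for every $\delta>0$ there are equal-length $\delta$-cycles through $z$ and through $w$ that stay more than a fixed $r>0$ apart at \emph{every} step. Iterates of these cycles are precisely your dispersion blocks, and the $\delta$-chains from $z$ to $w$ supplied by $z\sim w$ are the gluing segments; the splicing, limit-shadowing, and density count then proceed much as you outline (Lemma~4.1). Your $f^a$-versus-$f$ worry dissolves because all chains and cycles are $\delta$-chains of $f$ itself, and the inclusion $E(X,f)\subset\overline{DC1(X,f)}$ follows because $(z,w)\in B_\epsilon(x)\times B_\epsilon(y)$ by construction of $\pi$.
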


This theorem guarantees DC1 for any $f\in C(X)$ with the limit shadowing property without any other assumption than the positive topological entropy. In \cite{MO}, it was also proved that the set of maps with the {\em s-limit shadowing property} is dense in $C(M)$. We know that the s-limit shadowing property always implies the limit shadowing property \cite{BGO}. On the other hand, Yano proved that for any $K>0$, the set \[E_K(M)=\{f\in C(M):h_{top}(f)\ge K\}\] contains an open and dense subset of $C(M)$ \cite{Y}. Thus, Theorem 1.1 is implied by Theorem 1.2.

This paper consists of four sections. Some basic definitions and facts are briefly collected in Section 2. In Section 3, we introduce a relation on the chain recurrent set, and prove a few lemmas needed for the proof of Theorem 1.2. Then, Theorem 1.2 is proved in Section 4.

\section{Preliminaries}

In this section, we collect some basic definitions and facts used in this paper.

\subsection{{\it Chains, cycles, pseudo-orbits, and the shadowing property}}

Given a continuous map $f:X\to X$, a finite sequence $(x_i)_{i=0}^{k}$ of points in $X$, where $k$ is a positive integer, is called a {\em $\delta$-chain} of $f$ if $d(f(x_i),x_{i+1})\le\delta$ for every $0\le i\le k-1$. A $\delta$-chain $(x_i)_{i=0}^{k}$ of $f$ is said to be a {\em $\delta$-cycle} of $f$ if $x_0=x_k$. For $\delta>0$, a sequence $(x_i)_{i\ge0}$ of points in $X$ is called a {\em $\delta$-pseudo orbit} of $f$ if $d(f(x_i),x_{i+1})\le\delta$ for all $i\ge0$. Then, for $\epsilon>0$, a $\delta$-pseudo orbit  $(x_i)_{i\ge0}$ of $f$ is said to be {\em $\epsilon$-shadowed} by $x\in X$ if $d(f^i(x),x_i)\leq \epsilon$ for all $i\ge 0$. We say that $f$ has the {\em shadowing property} if for any $\epsilon>0$, there is $\delta>0$ such that every $\delta$-pseudo orbit of $f$ is $\epsilon$-shadowed by some point of $X$. A point $x\in X$ is said to be a {\em chain recurrent point} for $f$ if for any $\delta>0$, there is a $\delta$-cycle $(x_i)_{i=0}^{k}$ of $f$ such that $x_0=x_k=x$. The set of chain recurrent points for $f$ is denoted by $CR(f)$.

\subsection{{\it Entropy pairs}}

Given a continuous map $f:X\to X$ and an open cover $\mathcal{U}$ of $X$, we denote by $h(f,\mathcal{U})$ the {\em entropy of $f$ relative to $\mathcal{U}$} (see \cite{W} for details). The notion of {\em entropy pairs} was introduced by Blanchard \cite{B}. A pair of points $(x,y)\in X^2$ with $x\ne y$ is said to be an {\em entropy pair} for $f$ if for any closed neighborhoods $A$ of $x$ and $B$ of $y$, we have $h(f,\{A^c,B^c\})>0$ whenever $A\cap B=\emptyset$. The set of entropy pairs for $f$ is denoted by $E(X,f)\:(\subset X^2)$.

For two continuous maps $f:X\to X$ and $g:Y\to Y$, we say that $(Y,g)$ is a {\em factor} of $(X,f)$ if there is a surjective continuous map $\pi:X\to Y$ such that $\pi\circ f=g\circ\pi$. Such a map is called a {\em factor map}, and also denoted as $\pi:(X,f)\to(Y,g)$. The basic properties of entropy pairs is summarized in the following lemma.

\begin{lem}\cite{B}
Given a factor map $\pi:(X,f)\to (Y,g)$, we have the following properties.
\begin{itemize}
\item[(1)] $h_{top}(f)>0$ if and only if $E(X,f)\ne\emptyset$.
\item[(2)] For any $x,y\in X$, if $(x,y)\in E(X,f)$ and $\pi(x)\ne\pi(y)$, then $(\pi(x),\pi(y))\in E(Y,g)$.
\item[(3)] For any $(z,w)\in E(Y,g)$, there is $(x,y)\in\pi^{-1}(z)\times\pi^{-1}(w)$ such that $(x,y)\in E(X,f)$.
\end{itemize} 
\end{lem}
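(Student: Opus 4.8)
The plan is to derive all three assertions from two formal properties of the cover entropy $\mathcal{U}\mapsto h(f,\mathcal{U})$: monotonicity under refinement ($h(f,\mathcal{U})\le h(f,\mathcal{V})$ whenever every member of $\mathcal{V}$ lies in a member of $\mathcal{U}$) and subadditivity under joins ($h(f,\mathcal{U}\vee\mathcal{V})\le h(f,\mathcal{U})+h(f,\mathcal{V})$), together with the identity $h(f,\pi^{-1}\mathcal{U})=h(g,\mathcal{U})$ for a factor map $\pi$, which follows from $f^{-i}\pi^{-1}=\pi^{-1}g^{-i}$ and the fact that a cover of $Y$ and its $\pi$-preimage have the same minimal subcover cardinality when $\pi$ is onto. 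I will also use repeatedly that for closed sets with $C_1\cup C_2=C$, the join $\{C_1^c,D^c\}\vee\{C_2^c,D^c\}$ refines $\{C^c,D^c\}$.

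For (1), one implication is immediate: if $(x,y)\in E(X,f)$, pick disjoint closed neighborhoods $A\ni x$, $B\ni y$ and note $h_{top}(f)\ge h(f,\{A^c,B^c\})>0$. For the converse I would first pass to a two-element cover. If $h_{top}(f)>0$ there is a finite open cover $\mathcal{W}=\{W_1,\dots,W_n\}$ with $h(f,\mathcal{W})>0$; by the shrinking lemma choose open sets $V_i$ with $\overline{V_i}\subseteq W_i$ and $\bigcup_iV_i=X$, and set $\mathcal{C}_i=\{W_i,\,X\setminus\overline{V_i}\}$, a two-element open cover. Since $\bigvee_{i=1}^n\mathcal{C}_i$ refines $\mathcal{W}$ (the only atom not automatically inside some $W_i$ is $\bigcap_i(X\setminus\overline{V_i})=\varnothing$), subadditivity gives $h(f,\mathcal{C}_{i_0})>0$ for some $i_0$; write $\mathcal{C}_{i_0}=\{U_1,U_2\}$ and $F_j=X\setminus U_j$, so $F_1,F_2$ are disjoint closed sets, both nonempty (if $X\in\{U_1,U_2\}$ then the one-element cover $\{X\}$ is a subcover of every $\bigvee_{i<n}f^{-i}\{U_1,U_2\}$, forcing $h(f,\{U_1,U_2\})=0$), with $h(f,\{F_1^c,F_2^c\})=h(f,\{U_1,U_2\})>0$. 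Now I claim $F_1\times F_2$ contains an entropy pair. If not, each $(x,y)\in F_1\times F_2$ (note $x\ne y$, as $F_1\cap F_2=\varnothing$) has disjoint closed neighborhoods $A_{x,y}\ni x$, $B_{x,y}\ni y$ with $h(f,\{A_{x,y}^c,B_{x,y}^c\})=0$; finitely many boxes $\mathrm{int}(A_k)\times\mathrm{int}(B_k)$, $1\le k\le m$, cover the compact set $F_1\times F_2$, and $\bigvee_{k=1}^m\{A_k^c,B_k^c\}$ refines $\{F_1^c,F_2^c\}$ — if an atom $\bigcap_kC_k$ (with $C_k\in\{A_k^c,B_k^c\}$) met both $F_1$ and $F_2$, a witness in the first and a witness in the second would form a pair in some box $\mathrm{int}(A_{k_0})\times\mathrm{int}(B_{k_0})$, which is incompatible with $C_{k_0}$. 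Hence $h(f,\{U_1,U_2\})=h(f,\{F_1^c,F_2^c\})\le\sum_{k=1}^mh(f,\{A_k^c,B_k^c\})=0$, a contradiction.

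For (2), set $z=\pi(x)$ and $w=\pi(y)$ (with $z\ne w$) and take disjoint closed neighborhoods $A'\ni z$, $B'\ni w$ in $Y$. Then $A=\pi^{-1}(A')$, $B=\pi^{-1}(B')$ are disjoint closed neighborhoods of $x$, $y$ in $X$, and $\{A^c,B^c\}=\pi^{-1}\{A'^c,B'^c\}$, so $h(g,\{A'^c,B'^c\})=h(f,\{A^c,B^c\})>0$; since $A',B'$ were arbitrary, $(z,w)\in E(Y,g)$. For (3), let $(z,w)\in E(Y,g)$ (so $z\ne w$) and suppose no point of the nonempty compact fiber $\pi^{-1}(z)\times\pi^{-1}(w)\subseteq X^2$ (whose two coordinate sets are disjoint, so $x\ne y$ for every $(x,y)$ in it) is an entropy pair for $f$. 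As in (1), cover this fiber by finitely many boxes $\mathrm{int}(A_k)\times\mathrm{int}(B_k)$ with $A_k\cap B_k=\varnothing$ and $h(f,\{A_k^c,B_k^c\})=0$; by compactness of the fiber there are disjoint closed neighborhoods $A'\ni z$, $B'\ni w$ in $Y$ with $\pi^{-1}(A')\times\pi^{-1}(B')$ still covered by these boxes. The same atom-by-atom argument shows $\bigvee_{k=1}^m\{A_k^c,B_k^c\}$ refines $\{(\pi^{-1}A')^c,(\pi^{-1}B')^c\}$, so $h(g,\{A'^c,B'^c\})=h(f,\{(\pi^{-1}A')^c,(\pi^{-1}B')^c\})=0$, contradicting $(z,w)\in E(Y,g)$.

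The main obstacle will be the refinement bookkeeping in the compactness arguments for (1) and (3): membership in $E(X,f)$ is not an open condition on pairs, so one cannot directly extract a finite subcover of ``non-entropy pairs''. The right move is to work with a finite family of product boxes of zero relative entropy whose products cover the relevant compact set, and then to verify — with care about the direction of every refinement — that the join of the associated two-element covers refines the coarse cover $\{U_1,U_2\}$ (respectively $\{(\pi^{-1}A')^c,(\pi^{-1}B')^c\}$), and to produce the downstairs neighborhoods $A',B'$ from the upstairs box cover by the usual compactness argument (the preimage of a small closed neighborhood of $z$ is trapped inside $\bigcup_k\mathrm{int}(A_k)$ because $\pi(X\setminus\bigcup_k\mathrm{int}(A_k))$ is closed and misses $z$). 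Everything else reduces to formal manipulation of cover entropy and the factor identity.
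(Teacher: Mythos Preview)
The paper does not prove this lemma; it is stated with a citation to Blanchard \cite{B} and used as a black box. Your argument is essentially Blanchard's original proof and is correct: the reduction in (1) to a two-element cover via the shrinking lemma and subadditivity, followed by the box-covering contradiction, is exactly the standard route, and (2) and (3) follow from the factor identity $h(f,\pi^{-1}\mathcal{U})=h(g,\mathcal{U})$ plus the same refinement bookkeeping.

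One small point in (3): your parenthetical justification for the existence of $A',B'$ only treats the first coordinate, which is not enough---having $\pi^{-1}(A')\subset\bigcup_k\mathrm{int}(A_k)$ and $\pi^{-1}(B')\subset\bigcup_k\mathrm{int}(B_k)$ separately does not force $\pi^{-1}(A')\times\pi^{-1}(B')$ to be covered by the product boxes. The correct (and equally short) argument works in $X^2$: the set $X^2\setminus\bigcup_k\mathrm{int}(A_k)\times\mathrm{int}(B_k)$ is compact, its image under $\pi\times\pi$ is closed in $Y^2$ and misses $(z,w)$, so a product neighborhood $A'\times B'$ of $(z,w)$ has $(\pi\times\pi)$-preimage inside the union of boxes. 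With that fix your refinement claim and the rest of the argument go through.
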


We say that a continuous map $f:X\to X$ has {\em upe} if for any $(x,y)\in X^2$ with $x\ne y$, $(x,y)\in E(X,f)$. It is known that if $f$ has upe, then $f$ is weakly mixing, and when $f$ has the shadowing property, the converse holds \cite{LO}. By this, especially, the shift map $\sigma:\{0,1\}^\mathbb{N}\to\{0,1\}^\mathbb{N}$ has upe.

\section{A chain relation and a few lemmas}

In this section, we prove a few lemmas needed for the proof of Theorem 1.2. First, for any continuous map $f:X\to X$, we define a chain relation $\sim$ on $CR(f)$ as follows.

\begin{defi}
For any $x,y\in CR(f)$, $x\sim y$ if and only if for every $\delta>0$, there are integers $m=m(\delta)>0$ and $N=N(\delta)>0$ such that for any integer $n\ge N$, there are two $\delta$-chains $(x_i)_{i=0}^{mn}$, $(y_i)_{i=0}^{mn}\subset CR(f)$ of $f$ such that $x_0=y_{mn}=x$ and $x_{mn}=y_0=y$.
\end{defi}

\begin{rem}
\normalfont
Even if we replace `$\subset CR(f)$' with `$\subset X$' in the above definition, the relation $\sim$ does not change. This can be seen as follows. Given any $\epsilon>0$, there is $\delta=\delta(\epsilon)>0$ such that for any $\delta$-cycle $(z_i)_{i=0}^k$ of $f$, $d(z_i,CR(f))\le\epsilon$ holds for all $0\le i\le k$. Fix $x,y\in CR(f)$ and let $\alpha=(x_i)_{i=0}^l$, $\beta=(y_i)_{i=0}^l\subset X$ be $\delta$-chains of $f$ with $x_0=y_l=x$ and $x_l=y_0=y$. Then, since $\gamma=\alpha\beta$ is a $\delta$-cycle of $f$, the above property holds. Put $x'_0=x$, $x'_l=y$, and take $x'_i\in CR(f)$ with $d(x_i,x'_i)=d(x_i,CR(f))\le\epsilon$ for each $0<i<l$. Then, for any $\delta'>0$, if $\delta$ is sufficiently small, $\alpha'=(x'_i)_{i=0}^l\subset CR(f)$ gives a $\delta'$-chain of $f$, which has the same length and end points as $\alpha$. This argument also applies to $\beta$. We refer to the Robinson's proof of $CR(f|_{CR(f)})=CR(f)$ in \cite{Ro} for a similar argument.
\end{rem}

Then, the basic properties of the relation $\sim$ are given in the following lemma.

\begin{lem}
For any continuous map $f:X\to X$, the relation $\sim$ on $CR(f)$ is a closed $(f\times f)$-invariant equivalence relation. In other words, $\sim$ is an equivalence relation on $CR(f)$, $R=\{(x,y)\in CR(f)^2:x\sim y\}$ is a closed subset of $CR(f)^2$, and $(f\times f)(R)\subset R$.
\end{lem}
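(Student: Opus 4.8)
The plan is to verify the three assertions — equivalence relation, closedness, invariance — in turn, keeping in mind Remark 3.1, which lets us freely drop the constraint that the chains lie in $CR(f)$ (so I only need to produce $\delta$-chains in $X$ with the prescribed length $mn$ and endpoints).

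First, the equivalence relation properties. Reflexivity: for $x\in CR(f)$ and any $\delta>0$, there is a $\delta$-cycle at $x$ of some length $k=k(\delta)$; concatenating this cycle with itself repeatedly produces $\delta$-cycles at $x$ of every length that is a positive multiple of $k$, so taking $m=k$ works, with both chains $(x_i)$ and $(y_i)$ equal to the $n$-fold concatenation. Symmetry is immediate from the definition, since the roles of the two chains $(x_i)$ and $(y_i)$ (and of $x$ and $y$) are interchangeable. Transitivity is the one genuinely fiddly point: given $x\sim y$ with data $m_1,N_1$ and $y\sim z$ with data $m_2,N_2$, I want chains of a common length $mn$ from $x$ to $z$ and from $z$ to $x$. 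The natural idea is to set $m=m_1m_2$ and, for large $n$, splice an $x\to y$ chain of length $m_1(m_2 n)$ to a $y\to z$ chain of length $m_2(m_1 n)$, obtaining an $x\to z$ chain of length $2m_1m_2 n$; symmetrically for $z\to x$. To land exactly on the form ``$mn$'' one reindexes ($m=2m_1m_2$, range of $n$ adjusted), and one must check the lengths can genuinely be matched — this is where I expect to spend the most care, since $x\sim y$ only guarantees chains of length $m_1 n'$ for $n'\ge N_1$, not of every length. The fix is that $m_1(m_2 n)$ and $m_2(m_1 n)$ are both of the required divisible form once $n$ is large, so the matching is automatic; but writing this cleanly is the main bookkeeping obstacle in the proof.

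Next, closedness of $R$. Suppose $(x^{(j)},y^{(j)})\to(x,y)$ in $CR(f)^2$ with each $x^{(j)}\sim y^{(j)}$; I must show $x\sim y$. Fix $\delta>0$. By uniform continuity of $f$, choose $\eta\in(0,\delta/3)$ so that $d(a,b)\le\eta$ implies $d(f(a),f(b))\le\delta/3$. Pick $j$ large enough that $d(x^{(j)},x)\le\eta$ and $d(y^{(j)},y)\le\eta$, and apply $x^{(j)}\sim y^{(j)}$ with parameter $\delta/3$ to get $m,N$ and, for each $n\ge N$, $(\delta/3)$-chains $(x_i)_{i=0}^{mn}$, $(y_i)_{i=0}^{mn}$ from $x^{(j)}$ to $y^{(j)}$ and back. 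Now replace the endpoints: set $x_0=x$, $x_{mn}=y$ (and likewise for the other chain), leaving the interior points untouched. At the two seams the chain condition is restored because $d(f(x_0),x_1)\le d(f(x_0),f(x^{(j)}))+d(f(x^{(j)}),x_1)\le \delta/3+\delta/3\le\delta$, using the choice of $\eta$; similarly at $i=mn-1$ one uses $d(f(x_{mn-1}),x_{mn})\le d(f(x_{mn-1}),y^{(j)})+d(y^{(j)},y)\le\delta/3+\eta\le\delta$. All other consecutive pairs already satisfy the $(\delta/3)$-bound, hence the $\delta$-bound. Thus the same $m$ and $N$ witness $x\sim y$ at scale $\delta$; since $\delta$ was arbitrary, $x\sim y$ and $R$ is closed.

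Finally, $(f\times f)$-invariance. Let $x\sim y$; I claim $f(x)\sim f(y)$. Fix $\delta>0$ and choose $\eta\in(0,\delta)$ with $d(a,b)\le\eta\Rightarrow d(f(a),f(b))\le\delta$. Apply $x\sim y$ at scale $\eta$ to obtain $m,N$ and, for $n\ge N$, $\eta$-chains $x=x_0,x_1,\dots,x_{mn}=y$ and $y=y_0,\dots,y_{mn}=x$. Pushing each chain forward by $f$ gives sequences $f(x_0),\dots,f(x_{mn})$ with $d(f(f(x_i)),f(x_{i+1}))\le\delta$ for all $i$ (by the choice of $\eta$), i.e.\ $\delta$-chains from $f(x)$ to $f(y)$ and from $f(y)$ to $f(x)$, of the same length $mn$. (By the Remark these need not lie in $CR(f)$, though in fact $f(CR(f))\subset CR(f)$ makes even that automatic.) Hence $f(x)\sim f(y)$ with the same $m,N$ at scale $\delta$, and since $\delta$ was arbitrary, $(f(x),f(y))\in R$. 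This completes all three parts. The only step requiring real attention is the length-matching in transitivity; closedness and invariance are routine $\epsilon/3$-style arguments once the endpoint-surgery observation (already implicit in Remark 3.1) is in hand.
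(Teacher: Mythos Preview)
Your proof is correct and follows exactly the ``direct'' route that the paper explicitly acknowledges is possible but elects not to take. The paper instead establishes Lemma~3.1 by invoking the $\delta$-cyclic decomposition $\mathcal{D}(\delta)$ of $CR(f)$ from \cite{Ka1}: it proves (Lemma~3.2) that $x\sim y$ holds precisely when $x$ and $y$ lie in the same piece of $\mathcal{D}(\delta)$ for every $\delta>0$, after which all three assertions are immediate from the properties (D1)--(D2) of the decomposition (the pieces are clopen and cyclically permuted by $f$). Your argument has the advantage of being entirely self-contained, needing nothing beyond the definition of $\sim$, uniform continuity, and the endpoint-surgery observation of Remark~3.1; the transitivity bookkeeping with $m=2m_1m_2$ is handled correctly. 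The paper's route is less elementary---it imports a structural theorem---but it pays off downstream, since the same $\delta$-cyclic decomposition is the tool used to prove Lemma~3.3 (entropy pairs are $\sim$-related), so the machinery is not wasted.
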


\begin{rem}
\normalfont
Actually, we have $(f\times f)(R)=R$ in the above notation.
\end{rem}

It is not difficult to give a direct proof of Lemma 3.1 based only on the definition of the relation $\sim$. However, we confirm it through an alternative description of $\sim$. In \cite{Ka1}, it was shown that for any continuous map $f:X\to X$, $CR(f)$ admits the so-called {\em $\delta$-cyclic decomposition} for each $\delta>0$, that is, a family of disjoint subsets of $CR(f)$ \[\mathcal{D}(\delta)=\{D_{i,j}:1\le i\le K,\:0\le j\le m_i-1\}\] with the following properties, where $J=\{(i,j):1\le i\le K,\:0\le j\le m_i-1\}$.

\begin{itemize}
\item[(D1)] $CR(f)=\bigsqcup _{(i,j)\in J}D_{i,j}$, and every $D_{i,j}$, $(i,j)\in J$, is clopen in $CR(f)$.
\item[(D2)] Putting $D_{i,m_i}=D_{i,0}$, we have $f(D_{i,j})=D_{i,j+1}$ for every $(i,j)\in J$.
\item[(D3)] Given any $x,y\in D_{i,j}$ with $(i,j)\in J$, there is $N>0$ such that for any integer $n\ge N$, there is a $\delta$-chain $(x_\eta)_{\eta=0}^k\subset CR(f)$ of $f$ with $x_0=x$, $x_k=y$, and $k=m_in$.
\end{itemize}

Then, Lemma 3.1 is an immediate consequence of the above properties (D1), (D2), and the following lemma.

\begin{lem}
Let $f:X\to X$ be a continuous map. Then, for any $x,y\in CR(f)$, $x\sim y$ if and only if for every $\delta>0$, $x$ and $y$ are contained in the same component of $\mathcal{D}(\delta)$.
\end{lem}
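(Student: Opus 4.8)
The plan is to establish the two implications separately. The backward implication is essentially immediate from property (D3): if, for a given $\delta>0$, both $x$ and $y$ lie in a common member $D_{i,j}$ of $\mathcal D(\delta)$, then applying (D3) once to the pair $(x,y)$ and once to $(y,x)$ yields thresholds $N_1,N_2>0$, and with $m:=m_i$ and $N:=\max\{N_1,N_2\}$ one gets, for every $n\ge N$, $\delta$-chains in $CR(f)$ of the common length $mn$ from $x$ to $y$ and from $y$ to $x$; this is exactly what the definition of $\sim$ demands at resolution $\delta$, and $\delta>0$ was arbitrary.

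For the forward implication I would first record that, once $\delta$ is fixed, sufficiently fine chains inside $CR(f)$ are forced to follow the cyclic order of $\mathcal D(\delta)$. Since $CR(f)$ is compact and its partition into the finitely many clopen (by (D1)) sets $D_{i,j}$ has positive separation $\delta_0:=\min_{(i,j)}\operatorname{dist}(D_{i,j},CR(f)\setminus D_{i,j})>0$, any $\delta'$-chain $(z_\eta)_{\eta=0}^k\subset CR(f)$ with $\delta'<\delta_0$ and $z_0\in D_{i,a}$ satisfies $z_\eta\in D_{i,a+\eta}$ (indices mod $m_i$) for all $\eta$; this follows by induction, using that $f(z_\eta)\in D_{i,a+\eta+1}$ by (D2) while $d(f(z_\eta),z_{\eta+1})\le\delta'<\delta_0$ keeps $z_{\eta+1}$ in $D_{i,a+\eta+1}$.

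Now, given $x\sim y$ and $\delta>0$, I would apply the definition of $\sim$ not at $\delta$ but at the finer resolution $\delta':=\delta_0/2$, obtaining $m,N>0$ such that for every $n\ge N$ there are $\delta'$-chains in $CR(f)$ of length $mn$ both from $x$ to $y$ and from $y$ to $x$. Writing $x\in D_{i,a}$ and $y\in D_{i',b}$, the observation above applied to the chain from $x$ to $y$ gives $i'=i$ and $b\equiv a+mn\pmod{m_i}$ for \emph{every} $n\ge N$; subtracting the instances $n$ and $n+1$ forces $m_i\mid m$, hence $b\equiv a\pmod{m_i}$, so $x$ and $y$ lie in the same member of $\mathcal D(\delta)$. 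As $\delta>0$ was arbitrary, this completes the proof.

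The only delicate step is this last piece of resolution bookkeeping: a $\delta$-chain need not respect $\mathcal D(\delta)$, so one must invoke $\sim$ at a scale small compared with $\delta_0$, and it is the flexibility of (D3) — chains of \emph{every} sufficiently large length in the progression $m_i\mathbb N$ — together with the forced arithmetic progression of chain lengths, that pins down $m_i\mid m$ rather than merely placing $x$ and $y$ in the same ``big'' component $\bigsqcup_j D_{i,j}$. If \cite{Ka1} already records that $\delta$-chains in $CR(f)$ follow the cyclic order of $\mathcal D(\delta)$, the forward implication shortens accordingly.
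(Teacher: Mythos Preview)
Your proof is correct and follows essentially the same approach as the paper: both directions are handled the same way, with the forward implication using the positive separation $\delta'$ between distinct members of $\mathcal D(\delta)$ so that fine chains in $CR(f)$ are forced to follow the cyclic order dictated by (D2). The only difference is in the last arithmetic step: where you compare the instances $n$ and $n+1$ to deduce $m_i\mid m$, the paper simply applies the definition of $\sim$ at the single value $n=m_iN\ge N$, so that the resulting chain has length $mn=(mN)m_i$, already a multiple of $m_i$, and $y$ is forced into $D_{i,a}$ directly---a slightly quicker route to the same conclusion.
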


\begin{proof}
Suppose $x\sim y$. For any given $\delta>0$, let $\mathcal{D}(\delta)=\{D_{i,j}:1\le i\le K,\:0\le j\le m_i-1\}$ be the $\delta$-cyclic decomposition of $CR(f)$. Without loss of generality, we may assume $x\in D_{1,0}$. Then, it must be shown that $y\in D_{1,0}$. For the purpose, we take $\delta'>0$ such that $d(A,B)=\inf\{d(a,b):a\in A, b\in B\}>\delta'$ for any $A,B\in\mathcal{D}(\delta)$ with $A\ne B$. For any integer $n=qm_1+r\ge0$, where $q\ge0$ and $0\le r\le m_1-1$, put $D_{1,n}=D_{1,r}$. Then, for every $\delta'$-chain $(x_n)_{n=0}^L\subset CR(f)$ of $f$ with $x_0=x$, by (D2) and the choice of $\delta'$, we have $x_n\in D_{1,n}$ for all $0\le n\le L$. Now, since $x\sim y$, there are $m=m(\delta')>0$ and $N=N(\delta')>0$ as in Definition 3.1. Put $M=mm_1N$ and note that $m_1N\ge N$. Thus, there is a $\delta'$-chain $(x_n)_{n=0}^M\subset CR(f)$ of $f$ such that $x_0=x$ and $x_M=y$, and this implies $y=x_{(mN)m_1}\in D_{1,(mN)m_1}=D_{1,0}$. The converse is a direct consequence of (D3).
\end{proof}

The following lemma relates the entropy pairs with the chain relation $\sim$.

\begin{lem}
Let $f:X\to X$ be a continuous map. Then, for any $(x,y)\in E(X,f)$, $(x,y)\in CR(f)^2$ and $x\sim y$.
\end{lem}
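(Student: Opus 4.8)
The plan is to derive both parts of the statement from one kind of estimate. Writing $N_\epsilon(S)=\{z\in X:d(z,S)<\epsilon\}$, I first isolate the uniform sublinear bound
\begin{equation*}
\lim_{n\to\infty}\frac1n\sup_{z\in X}|\{0\le k\le n-1:f^k(z)\notin N_\epsilon(CR(f))\}|=0\qquad\text{for every }\epsilon>0,\tag{$\star$}
\end{equation*}
and then, in each case, pick disjoint closed neighbourhoods $A\ni x$ and $B\ni y$ for which a point can realise only $e^{o(n)}$ distinct length-$n$ ``itineraries'' with respect to the open cover $\{A^c,B^c\}$; since $h(f,\mathcal U)=\lim_n\frac1n\log N(\bigvee_{k=0}^{n-1}f^{-k}\mathcal U)$, this forces $h(f,\{A^c,B^c\})=0$, contradicting $(x,y)\in E(X,f)$.

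For $(x,y)\in CR(f)^2$: if $x\notin CR(f)$, fix $\epsilon>0$ with $d(x,CR(f))>2\epsilon$, put $A=\{z:d(z,x)\le\epsilon\}$, and take any closed neighbourhood $B$ of $y$ with $A\cap B=\emptyset$ (possible since $x\ne y$). Then $A\cap N_\epsilon(CR(f))=\emptyset$, so $\{0\le k\le n-1:f^k(z)\in A\}\subseteq\{0\le k\le n-1:f^k(z)\notin N_\epsilon(CR(f))\}$, which by $(\star)$ has $o(n)$ elements uniformly in $z$. Assigning to $z$ the word $w(z)\in\{A^c,B^c\}^n$ with $w(z)_k=B^c$ if $f^k(z)\in A$ and $w(z)_k=A^c$ otherwise gives $f^k(z)\in w(z)_k$ for all $k$ (using $A\cap B=\emptyset$), hence $z\in\bigcap_k f^{-k}(w(z)_k)$; so the sets $\bigcap_k f^{-k}(w_k)$ over realised words form a subcover of $\bigvee_{k=0}^{n-1}f^{-k}\{A^c,B^c\}$ of cardinality at most the number of subsets of $\{0,\dots,n-1\}$ of size $o(n)$, i.e.\ $e^{o(n)}$. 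Hence $h(f,\{A^c,B^c\})=0$, a contradiction; by symmetry $x,y\in CR(f)$.

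For $x\sim y$: by Lemma 3.2 it suffices to fix $\delta>0$ and show that $x,y$ lie in the same member of the $\delta$-cyclic decomposition $\mathcal D(\delta)=\{D_{i,j}:(i,j)\in J\}$. By the first part $x\in D_{i_0,j_0}$ and $y\in D_{i_1,j_1}$; suppose $(i_0,j_0)\ne(i_1,j_1)$. Using that the $D_{i,j}$ are finitely many, pairwise disjoint and compact, together with the uniform continuity of $f$, I fix a small $\epsilon>0$ (set $N=N_\epsilon(CR(f))$) so that the sets $N_\epsilon(D_{i,j})$ are pairwise disjoint --- whence every point of $N$ has a well-defined \emph{label} $(p,q)\in J$, namely the one with the point in $N_\epsilon(D_{p,q})$ --- and so that, by property $(D2)$ (i.e.\ $f(D_{p,q})=D_{p,q+1}$), whenever $f^k(z),f^{k+1}(z)\in N$ the label of $f^{k+1}(z)$ is obtained from that of $f^k(z)$ by $(p,q)\mapsto(p,q+1)$; I also shrink $\epsilon$ so that $A=\{z:d(z,x)\le\epsilon/2\}$ and $B=\{z:d(z,y)\le\epsilon/2\}$ are disjoint with $A\subset N_\epsilon(D_{i_0,j_0})$ and $B\subset N_\epsilon(D_{i_1,j_1})$. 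For a window of length $n$, attach to $z$ its label itinerary $\ell(z)\in(J\cup\{\ast\})^n$, where $\ell(z)_k=\ast$ exactly when $f^k(z)\notin N$. By the rotation property, once the positions of $\ast$ in $\ell(z)$ are fixed --- there are $o(n)$ of them by $(\star)$ --- the complement splits into at most that-many-plus-one runs, each determined by its first label, so there are $e^{o(n)}$ possible itineraries. Finally set $w(\ell)\in\{A^c,B^c\}^n$ with $w(\ell)_k=A^c$ if $\ell_k=(i_1,j_1)$ and $w(\ell)_k=B^c$ otherwise; then $f^k(z)\in A$ implies $\ell(z)_k=(i_0,j_0)\ne(i_1,j_1)$, hence $w(\ell(z))_k=B^c\ni f^k(z)$, while $f^k(z)\in B$ implies $\ell(z)_k=(i_1,j_1)$, hence $w(\ell(z))_k=A^c\ni f^k(z)$, and otherwise $f^k(z)\in A^c\cap B^c$ anyway, so $z\in\bigcap_k f^{-k}(w(\ell(z))_k)$. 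As before this yields a subcover of $\bigvee_{k=0}^{n-1}f^{-k}\{A^c,B^c\}$ of size $e^{o(n)}$, so $h(f,\{A^c,B^c\})=0$, contradicting $(x,y)\in E(X,f)$; thus $(i_0,j_0)=(i_1,j_1)$ for all $\delta$, i.e.\ $x\sim y$ by Lemma 3.2.

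The main obstacle is $(\star)$. I would argue: the supremum $a_n$ in $(\star)$ is subadditive in $n$, so $\lim_n a_n/n=\inf_n a_n/n=:\alpha$ exists; if $\alpha>0$, pick $z_n$ with at least $\alpha n$ exceptional times in $\{0,\dots,n-1\}$ and pass to a weak-$\ast$ subsequential limit $\mu$ of the empirical measures $\frac1n\sum_{k=0}^{n-1}\delta_{f^k(z_n)}$; then $\mu$ is $f$-invariant and, since $X\setminus N_\epsilon(CR(f))$ is closed, $\mu(X\setminus N_\epsilon(CR(f)))\ge\alpha>0$ --- impossible, since every $f$-invariant Borel probability measure is supported on $\Omega(f)\subseteq CR(f)$. (Alternatively $(\star)$, even with a bound independent of $n$, follows from a complete Lyapunov function for $f$ in the sense of Conley; the measure argument is self-contained.) The remaining points --- well-definedness and rotation of labels, admissibility of the words $w(z)$ and $w(\ell)$, and the estimate that the number of subsets of $\{0,\dots,n-1\}$ of size $o(n)$ is $e^{o(n)}$ --- are routine.
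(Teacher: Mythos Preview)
Your argument is correct, and it takes a genuinely different route from the paper. The paper's main proof is structural: for each $\delta>0$ it forms the quotient $X_R$ of $X$ by the closed $(f\times f)$-invariant equivalence relation that collapses each $D_{i,j}\in\mathcal D(\delta)$ to a point, observes that $\Omega(f_R)$ is the finite set $\{\pi(D_{i,j})\}$ so that $h_{top}(f_R)=0$, and then invokes Lemma~2.1 (functoriality of entropy pairs under factor maps) to force $\pi(x)=\pi(y)$, i.e.\ $\{x,y\}\subset D_{i,j}$ for some $(i,j)$. (A second, even shorter proof in the paper imports the Kerr--Li identification of entropy pairs with IE-pairs.) By contrast, you never pass to a factor and never use Lemma~2.1: you estimate $N\bigl(\bigvee_{k=0}^{n-1}f^{-k}\{A^c,B^c\}\bigr)$ directly, reducing everything to the uniform density statement~$(\star)$ and a counting of itineraries with $o(n)$ ``bad'' times. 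What the paper's approach buys is brevity and a clean reason \emph{why} the entropy vanishes (the obstruction factors through a zero-entropy system); what your approach buys is self-containment---no appeal to Blanchard's lifting lemma or to IE-pairs---and an explicit quantitative mechanism via~$(\star)$, which is of independent interest (your measure argument is essentially that invariant measures live on $CR(f)$). Both arguments ultimately pivot on the same combinatorial fact about $\mathcal D(\delta)$, namely that labels rotate deterministically along orbit segments that stay near $CR(f)$; you exploit it at the level of covers, the paper at the level of the quotient.
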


\begin{proof}
Suppose $(x,y)\in E(X,f)$. Given any $\delta>0$, let $\mathcal{D}(\delta)$ be the $\delta$-cyclic decomposition of $CR(f)$. We define a relation $R\subset X^2$ by \[R=\{(a,a)\in X^2:a\in X\}\cup\{(a,b)\in CR(f)^2:\text{$\{a,b\}\subset A $ for some $A\in\mathcal{D}(\delta)$}\}.\] Then, by (D1) and (D2), $R$ is a closed $(f\times f)$-invariant equivalence relation. Let $X_R$ be the quotient space, $\pi:X\to X_R$ be the quotient map, and $f_R:X_R\to X_R$ be the continuous map defined by $f_R\circ\pi=\pi\circ f$. $\pi$ is a factor map. We easily see that $\Omega(f_R)=\{\pi(A):A\in\mathcal{D}(\delta)\}$, a finite set (here $\Omega(\cdot)$ is the non-wandering set), and hence $h_{top}(f_R)=h_{top}(f_R|_{\Omega(f_R)})=0$. From Lemma 2.1, it follows that $\pi(x)=\pi(y)$ (or $(x,y)\in R$), which implies $\{x,y\}\subset A$ for some $A\in\mathcal{D}(\delta)$. Since $\delta>0$ is arbitrary, by Lemma 3.2, we conclude that $(x,y)\in CR(f)^2$ and $x\sim y$.
\end{proof}

\begin{rem}
\normalfont
For any $(x,y)\in CR(f)^2$, if $x\sim y$, then we easily see that the following holds: Given any $\delta>0$, there are an integer $a>0$ and $\delta$-chains $\gamma_{ij}=(y_{ij,\eta})_{\eta=0}^a$ of $f$ with $y_{ij,0}=i$ and $y_{ij,a}=j$ for all $i,j\in\{x,y\}$ (P). Conversely, it is obvious that if (P) holds for $(x,y)\in CR(f)^2$, then $x\sim y$. Keeping this equivalence in mind, Lemma 3.3 can also be proved as below by Kerr and Li characterization of the entropy pairs as the so-called (non-diagonal) {\em IE-pairs} (see \cite{KL} for details).
\end{rem}

\begin{proof}
Let $(x,y)\in E(X,f)$. Then, according to \cite{KL}, $(x,y)$ is an IE-pair for $f$. It follows that for any $\epsilon>0$, $(B_\epsilon(x), B_\epsilon(y))$ has an {\em independence set} of positive density (here $B_\epsilon(\cdot)$ is the $\epsilon$-ball). In particular, this implies the existence of two integers $0\le m<n$ such that for all $i,j\in\{x,y\}$, there is $x_{ij}\in X$ with $f^m(x_{ij})\in B_\epsilon(i)$ and $f^n(x_{ij})\in B_\epsilon(j)$. Then, it is clear that the property (P) holds for $(x,y)$, thus $(x,y)\in CR(f)^2$ and $x\sim y$.
\end{proof}

\section{Proof of Theorem 1.2}

In this section, we prove Theorem 1.2. For the purpose, we first define a separation property for two chain recurrent points.

\begin{defi}
Given a continuous map $f:X\to X$, we say that a pair of points $(x,y)\in CR(f)^2$ has property* if there is $r>0$ such that for any $\delta>0$, there are two $\delta$-cycles $\gamma_1=(x_i)_{i=0}^k$, $\gamma_2=(y_i)_{i=0}^k\subset CR(f)$ of $f$ such that $x_0=x_k=x$, $y_0=y_k=y$, and $d(x_i,y_i)>r$ for each $0\le i\le k$.
\end{defi}

Let $f:X\to X$ be a continuous map. A pair of points $(x,y)\in X^2$ is said to be a {\em proximal pair} for $f$ if $\liminf_{n\to\infty}d(f^n(x),f^n(y))=0$. If $(x,y)\in X^2$ is not proximal, i.e. $\inf_{n\ge0}d(f^n(x),f^n(y))>0$, the pair is said to be {\em distal}. 

\begin{rem}
\normalfont
(1) If $(x,y)\in CR(f)^2$ has property*, then $(x,y)$ is a distal pair for $f$.
$ $\newline
(2) If $(x,y)\in X^2$ is a distal pair for $f$, then any $(z,w)\in\omega((x,y),f\times f)\subset CR(f)^2$ satisfies property*.
$ $\newline
(3) Assume that  $(x,y)\in CR(f)^2$ with $x\sim y$ is a distal pair for $f$. Then, given any $(z,w)\in\omega((x,y),f\times f)\subset CR(f)^2$, since $\sim$ is a closed $(f\times f)$-invariant relation (by Lemma 3.1), $z\sim w$. It also holds that $(z,w)$ has property*.
\end{rem}

The following lemma gives a sufficient condition for DC1.

\begin{lem}
Suppose that a continuous map $f:X\to X$ satisfies the following properties.
\begin{itemize}
\item[(1)] There is $(z,w)\in CR(f)^2$ which satisfies $z\sim w$ and property*.
\item[(2)] For any sequence $(x_i)_{i\ge0}$ of points in $CR(f)$, if $\lim_{i\to\infty}d(f(x_i),x_{i+1})=0$, then there is $x\in X$ such that $\lim_{n\to\infty}\frac{1}{n}\sum_{i=0}^{n-1}d(f^i(x),x_i)=0$.
\end{itemize}
Then, $f$ exhibits distributional chaos of type 1 (DC1).
\end{lem}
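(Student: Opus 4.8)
The plan is to construct the uncountable scrambled set $S$ by a Cantor-style combinatorial coding, following the classical strategy for producing DC1-pairs from two "well-separated" cycles. By hypothesis (1) there is a pair $(z,w)\in CR(f)^2$ with $z\sim w$ having property*, so fix the corresponding $r>0$. The key building blocks are: for every $\delta>0$ and every large $n$, (a) by $z\sim w$ and Remark 3.3, $\delta$-chains in $CR(f)$ of a common controllable length connecting any of $z,w$ to any of $z,w$; (b) by property*, a pair of $\delta$-cycles $\gamma_1$ at $z$ and $\gamma_2$ at $w$, of equal length, that stay $r$-separated coordinatewise. Combining (a) and (b) one gets, for any prescribed $\delta$ and any sufficiently large stretch of time, a pair of $\delta$-chains in $CR(f)$: along one block the two chains run along $\gamma_1,\gamma_2$ respectively and hence are $r$-apart for a density-$1$ fraction of the block, while along an interleaved block they are forced to run along a common chain (after steering both copies to the same endpoint), hence coincide — giving closeness $<\delta$ for a density-$1$ fraction of that block.

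Next I would set up the coding. For a one-sided binary sequence $\xi\in\{0,1\}^{\mathbb N}$, I build a limit pseudo-orbit $(x_i^\xi)_{i\ge0}$ in $CR(f)$ by concatenating the blocks above: on the $k$-th stage I use error bound $\delta_k\downarrow0$ and block lengths growing fast enough (say $L_k$ chosen after $L_0,\dots,L_{k-1}$ so that $L_{k-1}/L_k\to0$, a "base-gap" condition), and I arrange the $r$-separated vs. coinciding pattern according to the bit $\xi_k$: concretely, make the two sequences assigned to $\xi$ and $\xi'$ (two distinct binary sequences differing at infinitely many coordinates, with also infinitely many agreements, which one arranges by reserving, say, the odd stages to always coincide and coding the pair only on even stages) satisfy that the density of $r$-separation along suitable windows tends to $1$ and the density of $\delta$-closeness along other windows tends to $1$. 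Because the $\delta_k\to0$ and the blocks are concatenated consistently with $f$ up to error $\delta_k$ on stage $k$, each $(x_i^\xi)_{i\ge0}$ is a limit pseudo-orbit of $f$ with $x_i^\xi\in CR(f)$ for all $i$.

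Now I invoke hypothesis (2): for each $\xi$ there is a point $x^\xi\in X$ with $\frac1n\sum_{i=0}^{n-1}d(f^i(x^\xi),x_i^\xi)\to0$. This Cesàro-limit-shadowing is exactly what transfers the combinatorics: if a prescribed event (say $d(f^i(x^\xi),f^i(y^\xi))>r$, or $<\delta$) holds along a block of the pseudo-orbits with density close to $1$, then, since the Cesàro averages of the shadowing errors vanish, the set of $i$ in that block where $x^\xi$ genuinely realizes the event still has density close to $1$ (a routine $\varepsilon$-of-room argument using that along the long $k$-th block $\delta_k$ is tiny and $L_{k-1}/L_k$ is tiny). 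Taking $S=\{x^\xi:\xi\in T\}$ for an uncountable set $T\subset\{0,1\}^{\mathbb N}$ of sequences that pairwise agree infinitely often and disagree infinitely often (an uncountable antichain of this kind is standard), one checks the two DC1 conditions: along the agreement stages one gets $\limsup\frac1n|\{i\le n-1: d(f^i(x^\xi),f^i(y^\xi))<\delta\}|=1$ for every $\delta>0$ (here one uses chains that actually coincide, so the bound is $\delta_k$, which beats any fixed $\delta$ eventually), and along the disagreement stages one gets $\limsup\frac1n|\{i\le n-1: d(f^i(x^\xi),f^i(y^\xi))>r/2\}|=1$, so $\delta_0=r/2$ works. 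Distinct $\xi$ give distinct $x^\xi$ precisely because the second condition forces $x^\xi\ne y^\xi$, so $S$ is uncountable.

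The main obstacle is the bookkeeping in the construction of the pseudo-orbits: one must choose the stagewise errors $\delta_k$, the block lengths, and the points produced by property* and by $z\sim w$ in a mutually compatible way so that (i) the concatenation is genuinely a limit pseudo-orbit in $CR(f)$, (ii) along the "coincide" windows the two pseudo-orbits are literally equal (which needs steering both copies to a common point via (a) before running a common chain), and (iii) the densities of the separation/closeness events, as measured along initial segments $[0,n-1]$ for all $n$ and not just at block boundaries, have $\limsup$ equal to $1$ — this is why the fast growth condition $L_{k-1}/L_k\to0$ is essential, and it is the only place where real care is needed. Everything else — the transfer through hypothesis (2), the choice of the uncountable coding family $T$, and the verification of the DC1 inequalities — is then routine.
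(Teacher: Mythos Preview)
Your proposal is correct and follows essentially the same approach as the paper: both construct, for each binary sequence, a limit pseudo-orbit in $CR(f)$ by concatenating stage-$k$ blocks built from the $r$-separated cycles at $z$ and $w$ (property*) together with connecting chains (from $z\sim w$), choose block lengths growing fast enough to force the relevant densities to $1$, apply hypothesis (2) to obtain Ces\`aro-shadowing points, and then verify the DC1 inequalities via the triangle inequality and a Markov-type count on the shadowing errors, over an uncountable family of binary sequences that pairwise agree and disagree infinitely often. The only cosmetic differences are that the paper arranges every block to start and end at $z$ (so that on agreement stages the two pseudo-orbits are literally equal, not merely $\delta_k$-close), and uses $\delta_0=r/3$ rather than $r/2$; neither affects the argument.
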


\begin{proof}
By (1), there is $r>0$ with the following property: For any integer $n\ge1$, there are an integer $a_n>0$, two $n^{-1}$-cycles $\gamma_{0,n}=(p_i)_{i=0}^{a_n}$, $\gamma_{1,n}=(q_i)_{i=0}^{a_n}\subset CR(f)$ of $f$, and two $n^{-1}$-chains $\alpha_n=(r_i)_{i=0}^{a_n}$, $\beta_n=(s_i)_{i=0}^{a_n}\subset CR(f)$ of $f$ such that $p_0=p_{a_n}=z$, $q_0=q_{a_n}=w$, $r_0=s_{a_n}=z$, $r_{a_n}=s_0=w$, and $d(p_i,q_i)>r$ for every $0\le i\le a_n$.  We take a sequence of integers $0<m_1<m_2<\cdots$ such that $m_1=2$, and, putting $b_n=\sum_{i=0}^{n-1}a_im_i$, we have
\begin{equation*}
\frac{a_n(m_n-2)+1}{b_n+a_nm_n+1}>1-n^{-1}
\end{equation*}
for every $n>1$. Put $c_{0,n}=\gamma_{0,n}^{m_n}$ and $c_{1,n}=\alpha_n\gamma_{1,n}^{m_n-2}\beta_n$ for each $n\ge1$. We have $l(c_{0,n})=l(c_{1,n})=a_nm_n$ for any $n\ge1$ (here $l(\cdot)$ denotes the length of the cycle). Then, for each $u=(u_n)_{n\in\mathbb{N}}\in\{0,1\}^\mathbb{N}$, define $\xi(u)=c_{u_1,1}c_{u_2,2}c_{u_3,3}\cdots\subset CR(f)$, a limit pseudo orbit of $f$, and take $x(u)$ as in the property (2). Let us fix an uncountable subset $S\subset\{0,1\}^\mathbb{N}$ such that for any $u,v\in S$ with $u\ne v$, both $\{n\in\mathbb{N}:u_n=v_n\}$ and $\{n\in\mathbb{N}:u_n\ne v_n\}$ are infinite sets. We shall prove that for any $u,v\in S$ with $u\ne v$, $(x(u),x(v))\in X^2$ is a DC1-pair for $f$.

For the purpose, put $\xi(u)=(x_{0,i})_{i\ge0}$, $\xi(v)=(x_{1,i})_{i\ge0}$, and $(x(u),x(v))=(x_0,x_1)$. Note that, putting $\epsilon_{j,n}=\frac{1}{n}\sum_{i=0}^{n-1}d(f^i(x_j),x_{j,i})$, we have $\lim_{n\to\infty}\epsilon_{j,n}=0$ for each $j\in\{0,1\}$. Given any $\delta>0$, putting $c_n=b_n+a_nm_n+1$, we have
\begin{equation*}
|\{0\le i\le c_n-1: d(f^i(x_j),x_{j,i})<\frac{\delta}{2}\}|\ge c_n(1-2\delta^{-1}\epsilon_{j,c_n})
\end{equation*}
for any $n\ge1$ and $j\in\{0,1\}$. On the other hand, for any $n>1$, if $u_n=v_n$, then because $c_{u_n,n}=c_{v_n,n}$,
\begin{equation*}
\{b_n\le i\le b_n+a_nm_n\}\subset\{0\le i\le c_n-1:x_{0,i}=x_{1,i}\},
\end{equation*}
and so
\begin{equation*}
|\{0\le i\le c_n-1:x_{0,i}=x_{1,i}\}|\ge c_n\cdot\frac{a_nm_n+1}{c_n}>c_n(1-n^{-1}).
\end{equation*}
Hence, for any $n>1$ with $u_n=v_n$, we have
\begin{equation*}
|\{0\le i\le c_n-1:d(f^i(x_0),f^i(x_1))<\delta\}|>c_n(1-n^{-1}-2\delta^{-1}\epsilon_{0,c_n}-2\delta^{-1}\epsilon_{1,c_n}).
\end{equation*}
This together with $|\{n\in\mathbb{N}:u_n=v_n\}|=\infty$ yields
\begin{equation*}
\limsup_{n\to\infty}\frac{1}{n}|\{0\le i\le n-1:d(f^i(x_0),f^i(x_1))<\delta\}|=1.
\end{equation*}
Note that $\delta>0$ is arbitrary. It only remains to prove the following:
\begin{equation*}
\limsup_{n\to\infty}\frac{1}{n}|\{0\le i\le n-1:d(f^i(x_0),f^i(x_1))>\frac{r}{3}\}|=1.
\end{equation*}
Similarly as above, we have
\begin{equation*}
|\{0\le i\le c_n-1: d(f^i(x_j),x_{j,i})<\frac{r}{3}\}|\ge c_n(1-3r^{-1}\epsilon_{j,c_n})
\end{equation*}
for any $n\ge1$ and $j\in\{0,1\}$. On the other hand, for any $n>1$, if $u_n\ne v_n$, then because $c_{u_n,n}\ne c_{v_n,n}$,
\begin{equation*}
\{b_n+a_n\le i\le b_n+a_n+a_n(m_n-2)\}\subset\{0\le i\le c_n-1:d(x_{0,i},x_{1,i})>r\},
\end{equation*}
and so
\begin{equation*}
|\{0\le i\le c_n-1:d(x_{0,i},x_{1,i})>r\}|\ge c_n\cdot\frac{a_n(m_n-2)+1}{c_n}>c_n(1-n^{-1}).
\end{equation*}
Thus, for any $n>1$ with $u_n\ne v_n$, we have
\begin{equation*}
|\{0\le i\le c_n-1:d(f^i(x_0),f^i(x_1))>\frac{r}{3}\}|>c_n(1-n^{-1}-3r^{-1}\epsilon_{0,c_n}-3r^{-1}\epsilon_{1,c_n}).
\end{equation*}
This together with $|\{n\in\mathbb{N}:u_n\ne v_n\}|=\infty$ implies the required equation. Now, It has been proved that for any $u,v\in S$ with $u\ne v$, $(x(u),x(v))$ is a DC1-pair for $f$. Since $S$ is an uncountable set, $f$ exhibits DC1.
\end{proof}

We need the following lemma from \cite{Ka2}.

\begin{lem}\cite[Theorem 1.1]{Ka2}
Let $f:X\to X$ be a continuous map with the limit shadowing property. Then, $f|_{CR(f)}:CR(f)\to CR(f)$ satisfies the shadowing property.
\end{lem}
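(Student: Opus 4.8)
\emph{Approach.} I would argue the contrapositive. Assuming $f|_{CR(f)}$ does not have the shadowing property, I would splice infinitely many ``unshadowable'' finite pseudo orbits of $f|_{CR(f)}$ into a single \emph{limit} pseudo orbit of $f$; by the limit shadowing property this is asymptotically shadowed by a genuine orbit of $f$, and reading off a suitable block of that orbit should yield a point of $CR(f)$ which shadows one of the unshadowable pieces — a contradiction.

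\emph{Extracting the bad pieces.} If $f|_{CR(f)}$ has no shadowing property, there is $\epsilon_0>0$ such that for every $\delta>0$ some $\delta$-pseudo orbit in $CR(f)$ is not $\epsilon_0$-shadowed by any point of $CR(f)$. By a routine compactness argument (using that $CR(f)$ is compact and $f$-invariant) one may take these pseudo orbits \emph{finite}, say $\xi_n=(x^n_0,\dots,x^n_{L_n})\subset CR(f)$, a $\tfrac1n$-pseudo orbit not $\epsilon_0$-shadowed in $CR(f)$; moreover $L_n\to\infty$, since otherwise a subsequential limit of the $\xi_n$ would be a genuine orbit segment, trivially shadowed by its initial point, which lies in $CR(f)$ since that set is closed. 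Passing to a subsequence we may also assume $x^n_0\to a$ and $x^n_{L_n}\to b$ with $a,b\in CR(f)$.

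\emph{Splicing into a limit pseudo orbit.} The delicate point is to join the terminal point $x^n_{L_n}$ of $\xi_n$ to the initial point $x^{n+1}_0$ of $\xi_{n+1}$ by a bridge $\alpha_{n+1}\subset CR(f)$ whose step-errors tend to $0$ with $n$; a bridge of merely bounded error would produce an ordinary pseudo orbit rather than a limit pseudo orbit. This is exactly what the Section~3 machinery supplies: after passing to a subsequence and using the finiteness of the $\delta$-cyclic decomposition to place the relevant endpoints in a single chain class — so that the relation $\sim$ and property~(D3) apply, together with the fact that every chain recurrent point sits on $\epsilon$-cycles of every scale $\epsilon$ — one obtains, for each $\epsilon>0$, $\epsilon$-chains in $CR(f)$ joining these endpoints. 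Taking $\epsilon=\epsilon_n\to0$ gives bridges $\alpha_{n+1}$ with vanishing error, and then $\zeta:=\xi_1\,\alpha_2\,\xi_2\,\alpha_3\,\xi_3\cdots$ is a limit pseudo orbit of $f$ lying in $CR(f)$, with step-error $\le\tfrac1n$ throughout the $n$-th block.

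\emph{Applying limit shadowing, and the main obstacle.} By the limit shadowing property there is $x\in X$ with $d(f^i(x),\zeta_i)\to0$. If $\xi_n$ occupies positions $t_n,\dots,t_n+L_n$ in $\zeta$, then $\gamma_n:=\sup_{s\ge0}d(f^{t_n+s}(x),\zeta_{t_n+s})\to0$, so $y_n:=f^{t_n}(x)$ shadows $\xi_n$ within $\gamma_n$ and, since $\zeta\subset CR(f)$, its whole forward orbit stays within $\gamma_n$ of $CR(f)$. The remaining step — which I expect to be the hard one — is to replace $y_n$ by a point of $CR(f)$ that still $\epsilon_0$-shadows $\xi_n$; this contradicts the choice of $\xi_n$ and finishes the proof. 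The naive choice, the nearest point of $CR(f)$ to $y_n$, need not work, because $f^s$ can separate it from $y_n$ over the window $0\le s\le L_n$, whose length is unbounded. One must instead exploit the confinement of the orbit of $y_n$ to an arbitrarily thin neighborhood of $CR(f)$: where $CR(f)$ decomposes into small clopen pieces, cyclically permuted by $f$, a nearby point of $CR(f)$ automatically runs through the same pieces as $\xi_n$; and in the remaining case, where $CR(f)$ carries a chain class of diameter $\ge\epsilon_0$, one applies the limit shadowing property a second time, to the $CR(f)$-projection of the orbit of $y_n$ (which is a limit pseudo orbit of $f|_{CR(f)}$), using the $f$-invariance of $CR(f)$ and the structure of $\omega$-limit sets to force the tracking point into $CR(f)$.
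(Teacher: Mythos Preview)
The paper does not give its own proof of this lemma; it is quoted as Theorem~1.1 of \cite{Ka2} and used as a black box. So there is nothing in the present paper to compare your argument against, and the question is whether your sketch stands on its own. The overall architecture---contrapositive, splice bad finite pseudo orbits into a limit pseudo orbit, invoke limit shadowing---is the natural one, but the two places you yourself flag as delicate are genuine gaps, not routine details.

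\emph{Splicing.} Pigeonhole on a single $\delta$-cyclic decomposition puts infinitely many endpoints into one $D_{i,j}$ for that fixed $\delta$; it does not put them into a common chain class (i.e.\ the same piece of $\mathcal D(\delta)$ for \emph{every} $\delta$), which is what (D3) would need in order to manufacture bridges whose error tends to $0$. Passing to convergent endpoints $x^n_{L_n}\to b$, $x^n_0\to a$ only witnesses $a\leadsto b$ in the chain preorder, via the $\xi_n$ themselves; there is no reason for $b\leadsto a$, and without that you cannot build small-error bridges from near $b$ back to near $a$. So ``place the relevant endpoints in a single chain class'' is precisely the step that needs an argument, and none is given.

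\emph{Landing in $CR(f)$.} This is the more serious gap. Limit shadowing hands you $y_n=f^{t_n}(x)\in X$ with $d(y_n,CR(f))\to0$, not $y_n\in CR(f)$. Your first proposed fix---when $CR(f)$ decomposes into small clopen cyclically permuted pieces---only shows that a nearby $CR(f)$-point visits the same elements of $\mathcal D(\delta)$ as $\xi_n$; that is far weaker than $\epsilon_0$-shadowing, since those pieces need not have diameter below $\epsilon_0$. Your second fix---project the orbit of $y_n$ to $CR(f)$ and limit-shadow again---does not work as stated: nearest-point projection onto $CR(f)$ is in general neither single-valued nor continuous, so the projected sequence need not be a limit pseudo orbit of $f|_{CR(f)}$, and in any case a second application of limit shadowing (which is a hypothesis on $f$, not on $f|_{CR(f)}$) again returns only a point of $X$. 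The $\omega$-limit idea is the one with real content---$\omega(x)\subset CR(f)$ automatically---but to exploit it you would have to arrange that some \emph{fixed} bad block recurs infinitely often along $\zeta$, so that a subsequential limit of the $y_n$ (which then lies in $\omega(x)\subset CR(f)$) still shadows it. Your construction places each $\xi_n$ only once and provides no such recurrence, so the contradiction is not closed.
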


It is known that the shadowing property with positive topological entropy allows us to obtain a subsystem of $(X,f^a)$ (for some $a>0$) which has $(\{0,1\}^\mathbb{N},\sigma)$ (here $\sigma$ is the shift map) as a factor (see, for example, \cite{LO, MoO}). We use it and Lemma 3.3 to find a pair of chain recurrent points with the two properties assumed in Lemma 4.1. Since, especially, the second part of the proof of Theorem 1.2 relies on a specific construction of the subsystem through $\sim$, for completeness, we briefly describe it below.

\medskip
\noindent
{\em Construction}: Suppose that $g=f|_{CR(f)}:CR(f)\to CR(f)$ has the shadowing property. Given $(x,y)\in CR(f)^2$ with $x\ne y$ and $x\sim y$, put $x_0=x$, $x_1=y$, and let $0<\epsilon<2^{-1}d(x,y)$. Take $\delta=\delta(\epsilon)>0$ such that every $\delta$-pseudo orbit of $g$ is $\epsilon$-shadowed by some point of $CR(f)$. Since $x\sim y$, there are an integer $a>0$ and $\delta$-chains $\gamma_{ij}=(y_{ij,\eta})_{\eta=0}^a\subset CR(f)$ of $g$ with $y_{ij,0}=x_i$ and $y_{ij,a}=x_j$ for each $i,j\in\{0,1\}$. For any $s=(s_i)_{i\in\mathbb{N}}\in\{0,1\}^\mathbb{N}$, consider the $\delta$-pseudo orbit $\xi(s)=\gamma_{s_1s_2}\gamma_{s_2s_3}\gamma_{s_3s_4}\cdots\subset CR(f)$ of $g$, which is $\epsilon$-shadowed by some $x(s)\in CR(f)$. Let
\begin{equation*} 
Y=\{x\in CR(f): \text{$\xi(s)$ is $\epsilon$-shadowed by $x$ for some $s\in\{0,1\}^\mathbb{N}$}\}
\end{equation*}
and note that $Y$ is a compact $f^a$-invariant subset of $CR(f)$. Define a map $\pi:Y\to\{0,1\}^\mathbb{N}$ by the condition that $\xi(\pi(x))$ is $\epsilon$-shadowed by $x$ for each $x\in Y$. Then, it is easy to see that $\pi:(Y,f^a)\to(\{0,1\}^\mathbb{N},\sigma)$ is a factor map. 

\begin{rem}
\normalfont
By the above construction, we see that if $f|_{CR(f)}$ has the shadowing property, then for any $(x,y)\in CR(f)^2$ with $x\ne y$, $x\sim y$ implies $(x,y)\in E(X,f)$. Indeed, we have $x(s)\in\bigcap_{i\in\mathbb{N}}f^{-(i-1)a}(B_\epsilon(x_{s_i}))\ne\emptyset$ for every $s\in\{0,1\}^\mathbb{N}$ in the above notation, which implies \[h(f,\{B_\epsilon(x)^c,B_\epsilon(y)^c\})\ge a^{-1}\log2>0.\] Since $0<\epsilon<2^{-1}d(x,y)$ is arbitrary, $(x,y)\in E(X,f)$. The fact that the shift map $\sigma:\{0,1\}^\mathbb{N}\to\{0,1\}^\mathbb{N}$ has upe, which is mentioned in Section 2, is a consequence of this remark.
\end{rem}

Combining this remark with Lemma 3.3 (and Lemma 2.1), we get the following corollary, which characterizes the set of entropy pairs by the relation $\sim$, under the assumption of the shadowing property.

\begin{cor}
Let $f:X\to X$ be a continuous map. If $f|_{CR(f)}:CR(f)\to CR(f)$ has the shadowing property, then it holds that \[E(X,f)=\{(x,y)\in CR(f)^2:x\ne y\:\:\text{and}\:\:x\sim y\}.\] In particular, $h_{top}(f)>0$ if only if $x\sim y$ for some $(x,y)\in CR(f)^2$ with $x\ne y$.
\end{cor}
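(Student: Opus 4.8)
The plan is to prove Corollary 4.1 by combining three earlier facts: Lemma 3.3, which says every entropy pair lies in $CR(f)^2$ and is $\sim$-related; Remark 4.2, which says (under the hypothesis that $f|_{CR(f)}$ has the shadowing property) that any distinct $\sim$-related pair in $CR(f)^2$ is an entropy pair; and Lemma 2.1(1), relating non-emptiness of $E(X,f)$ to positivity of $h_{top}(f)$. So the corollary is essentially just the assembly of the two inclusions proved earlier, plus a short deduction for the ``in particular'' clause.

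First I would prove the inclusion $\{(x,y)\in CR(f)^2 : x\ne y \text{ and } x\sim y\}\subset E(X,f)$. Let $(x,y)$ be such a pair. Since $f|_{CR(f)}$ has the shadowing property by hypothesis, Remark 4.2 applies directly and gives $(x,y)\in E(X,f)$. (Here one should note that an entropy pair is by definition required to have distinct coordinates, which is exactly the condition $x\ne y$ we assumed, so there is no clash.)

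Next I would prove the reverse inclusion $E(X,f)\subset\{(x,y)\in CR(f)^2 : x\ne y \text{ and } x\sim y\}$. Let $(x,y)\in E(X,f)$. By definition of entropy pair, $x\ne y$. By Lemma 3.3, $(x,y)\in CR(f)^2$ and $x\sim y$. Hence $(x,y)$ belongs to the right-hand set. Note this inclusion does not even need the shadowing hypothesis — Lemma 3.3 is unconditional — so only the first inclusion uses it. Combining the two inclusions gives the claimed equality $E(X,f)=\{(x,y)\in CR(f)^2 : x\ne y \text{ and } x\sim y\}$.

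Finally, for the ``in particular'' statement: by Lemma 2.1(1), $h_{top}(f)>0$ if and only if $E(X,f)\ne\emptyset$, and by the equality just established $E(X,f)\ne\emptyset$ if and only if there exists $(x,y)\in CR(f)^2$ with $x\ne y$ and $x\sim y$. Chaining these equivalences yields the desired statement. I do not anticipate any real obstacle here: the genuine work was done in Lemma 3.3 and in the Construction/Remark 4.2, and the corollary is a one-line bookkeeping consequence; the only point requiring a moment's care is making sure the $x\ne y$ convention in the definition of entropy pairs matches the $x\ne y$ appearing in the displayed set, which it does.
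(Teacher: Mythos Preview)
Your proposal is correct and matches the paper's own argument exactly: the paper states the corollary immediately after Remark~4.2 with the one-line justification ``Combining this remark with Lemma~3.3 (and Lemma~2.1), we get the following corollary,'' which is precisely the assembly of inclusions and the final equivalence via Lemma~2.1(1) that you describe.
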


Finally, let us prove Theorem 1.2.

\begin{proof}[Proof of Theorem 1.2]
By (1) and Lemma 4.2, $f|_{CR(f)}:CR(f)\to CR(f)$ satisfies the shadowing property. Since $E(X,f)\ne\emptyset$ by (2) and Lemma 2.1, we can take $(x,y)\in E(X,f)$. Then, by Lemma 3.3, it holds that $(x,y)\in CR(f)^2$, $x\ne y$, and $x\sim y$; therefore, we have a factor map $\pi:(Y,f^a)\to(\{0,1\}^\mathbb{N},\sigma)$ as above. Since $\sigma$ has upe, especially, we have $(0^\infty,1^\infty)\in E(\{0,1\}^\mathbb{N},\sigma)$. According to Lemma 2.1, there is $(p,q)\in E(Y,f^a)$ such that $\pi(p)=0^\infty$ and $\pi(q)=1^\infty$. Because $(0^\infty,1^\infty)$ is a distal pair for $\sigma$, $(p,q)$ is a distal pair for $f^a$, so for $f$. On the other hand, since \[E(Y,f^a)\subset E(X,f^a)\subset E(X,f),\] we have $(p,q)\in E(X,f)$, which together with Lemma 3.3 implies $(p,q)\in CR(f)^2$ and $p\sim q$. Fix $(z,w)\in\omega((p,q),f\times f)\subset CR(f)^2$. Then, as Remark 4.1, it satisfies $z\sim w$ and property*. Thus, by (1) and Lemma 4.1, we conclude that $f$ exhibits DC1.

Let us prove $E(X,f)\subset\overline{DC1(X,f)}$. Given any $(x,y)\in E(X,f)$, Lemma 3.3 implies $(x,y)\in CR(f)^2$, $x\ne y$, and $x\sim y$, so we have a factor map $\pi:(Y,f^a)\to(\{0,1\}^\mathbb{N},\sigma)$ as above (for any $0<\epsilon<2^{-1}d(x,y)$). Take $(p,q)\in E(Y,f^a)$ such that $\pi(p)=0^\infty$ and $\pi(q)=1^\infty$. Fix $(z,w)\in\omega((p,q),f^a\times f^a)$ and note that
\begin{equation*}
(z,w)\in\omega(p,f^a)\times\omega(q,f^a)\subset\pi^{-1}(0^\infty)\times\pi^{-1}(1^\infty)\subset B_\epsilon(x)\times B_\epsilon(y).
\end{equation*}
Since $\omega((p,q),f^a\times f^a)\subset\omega((p,q),f\times f)$, similarly as above, $(z,w)\in CR(f)^2$ satisfies $z\sim w$ and property*. Then, we see that, because $f$ has the limit shadowing property, in the proof of Lemma 4.1, $x(u)$ can be taken as a limit shadowing point of $\xi(u)$ for any $u\in\{0,1\}^\mathbb{N}$. Also, in the proof of Lemma 4.1, it has been proved that for any $u,v\in\{0,1\}^\mathbb{N}$ with $|\{n\in\mathbb{N}:u_n=v_n\}|=\infty$ and $|\{n\in\mathbb{N}:u_n\ne v_n\}|=\infty$, $(x(u), x(v))$ is a DC1-pair for $f$. For concreteness, put $u=0^\infty$ and $v=(01)^\infty=0101\cdots$. Then, by the definition of $\xi(u)$ and $\xi(v)$, we easily see that there is a sequence of integers $0<n_1<n_2\cdots$ such that \[\lim_{j\to\infty}(f^{n_j}(x(u)),f^{n_j}(x(v)))=(z,w).\] Since every $(f^{n_j}(x(u)),f^{n_j}(x(v)))$, $j\ge1$, is a DC1-pair for $f$ as $(x(u),x(v))$ is so, we have $(z,w)\in\overline{DC1(X,f)}$. Thus, by $(z,w)\in B_\epsilon(x)\times B_\epsilon(y)$, and since $0<\epsilon<2^{-1}d(x,y)$ is arbitrary, $(x,y)\in\overline{DC1(X,f)}$, completing the proof.
\end{proof}

\appendix
\section{}

In this appendix, as a complement to the proof of Theorem 1.2, we show that the existence of $(z,w)\in CR(f)^2$ with $z\sim w$ and property* is a necessary condition for DC1. For the purpose, the next lemma is needed.

\begin{lem}
Let $f:X\to X$ be a continuous map. If $(x,y)\in X^2$ is a proximal pair for $f$, then any $(z,w)\in\omega((x,y),f\times f)\subset CR(f)^2$ satisfies $z\sim w$.
\end{lem}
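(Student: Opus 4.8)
The plan is to mimic the reasoning already used for distal pairs in Remark 4.1(2), but adapted to the proximal situation via the chain relation $\sim$. First I would fix a proximal pair $(x,y)$ and a limit point $(z,w)\in\omega((x,y),f\times f)$; note $(z,w)\in CR(f)^2$ since $\omega$-limit points always lie in the chain recurrent set, and since $\sim$ is a closed $(f\times f)$-invariant equivalence relation (Lemma 3.1), it suffices to produce, for each fixed $\delta>0$, a single pair $(z',w')$ in the $(f\times f)$-orbit closure of $(x,y)$ whose coordinates lie in the same component of $\mathcal D(\delta)$; by Lemma 3.2 and closedness of $R$ this will give $z\sim w$. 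Actually the cleanest route is: show directly that $z$ and $w$ lie in the same component of $\mathcal D(\delta)$ for every $\delta$, hence $z\sim w$ by Lemma 3.2.

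The key mechanism is proximality: there is a sequence $n_k\to\infty$ with $d(f^{n_k}(x),f^{n_k}(y))\to0$. Fix $\delta>0$ and let $\mathcal D(\delta)=\{D_{i,j}\}$ be the $\delta$-cyclic decomposition of $CR(f)$. Choose $\epsilon>0$ smaller than half the minimal distance between distinct members of $\mathcal D(\delta)$ and also small enough that any point within $\epsilon$ of $CR(f)$ can be joined to its nearest chain-recurrent point by a very short chain (as in Remark 3.1). Passing to a subsequence, arrange that $f^{n_k}(x)\to z^*$ and $f^{n_k}(y)\to w^*$ with $(z^*,w^*)\in\omega((x,y),f\times f)$; proximality forces $z^*=w^*$, so for large $k$ both $f^{n_k}(x)$ and $f^{n_k}(y)$ are within $\epsilon$ of a common point of $CR(f)$, hence lie in (the $\epsilon$-neighbourhood of) a single component $A\in\mathcal D(\delta)$. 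Now I would compare this with $(z,w)$: choosing another subsequence along which $f^{m_k}(x)\to z$ and $f^{m_k}(y)\to w$, the forward iterates $f^{m_k}(x),f^{m_k}(y)$ are each $\epsilon$-close to $CR(f)$, hence each lies near some component of $\mathcal D(\delta)$; one then uses property (D2) together with the fact that between times $n_k$ and $m_k$ the $f$-dynamics permutes the components cyclically within each cycle-block, so that $f^{n_k}(x)$ and $f^{n_k}(y)$ lying in the same component propagates (applying $f$ repeatedly, possibly through several blocks) to $f^{m_k}(x)$ and $f^{m_k}(y)$ lying in the same component. Taking $k\to\infty$ and using that the $D_{i,j}$ are clopen in $CR(f)$, $z$ and $w$ lie in the same component of $\mathcal D(\delta)$. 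Since $\delta$ is arbitrary, Lemma 3.2 gives $z\sim w$.

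The main obstacle is the bookkeeping in the last step: a priori the two proximality-subsequence times $n_k$ and the $\omega$-limit times $m_k$ need not be comparable, and the components $D_{i,j}$ within different cycles $D_{i,\cdot}$ of $\mathcal D(\delta)$ are permuted with possibly different periods $m_i$, so one must argue that ``being in a common component at time $n_k$'' is preserved under the forward map $f$ — this uses (D2), $f(D_{i,j})=D_{i,j+1}$, which indeed maps a component into a component and hence preserves the relation ``lie in the same component'', so iterating $f$ any number of times is harmless. The genuinely delicate point is that $f^{n_k}(x)$ and $f^{n_k}(y)$ themselves need not be \emph{in} $CR(f)$, only near it; here I would invoke the argument of Remark 3.1 to replace them by nearby chain-recurrent points without changing which component they are in (the $\epsilon$-separation of $\mathcal D(\delta)$ ensures the nearest chain-recurrent point lies in the same component), so everything can be pushed inside $CR(f)$ where (D1)--(D3) apply. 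Once this is set up, the conclusion $(z,w)\in CR(f)^2$ and $z\sim w$ follows, and — as already observed in Remark 4.1(2) applied to the distal/proximal dichotomy — combined with the earlier lemmas this yields that such a pair with property* is necessary for DC1.
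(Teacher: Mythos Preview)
Your overall strategy---show that $z,w$ lie in a common component of $\mathcal D(\delta)$ for every $\delta$ and then invoke Lemma~3.2---is exactly the paper's. The gap is in your propagation step. You correctly flag that $f^{n_k}(x),f^{n_k}(y)$ need not lie in $CR(f)$, but the fix you propose (replace them by nearby chain-recurrent points as in Remark~3.1) does not do the job: once you replace $f^{n_k}(x)$ by a nearby $\tilde x\in CR(f)$, you lose control of the forward orbit, since $f(\tilde x)$ has no reason to stay close to $f^{n_k+1}(x)$. So you cannot ``push everything inside $CR(f)$'' and then iterate (D2) on the replacements while still tracking the true orbit out to the separate times $m_k$.

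The paper avoids this detour with a single observation made once at the outset: for every $p\in X$ there is a component $A_p\in\mathcal D(\delta)$ with $\lim_{n\to\infty}d(f^n(p),f^n(A_p))=0$. (This is precisely the asymptotic-tracking statement your argument is reaching for; it follows from $d(f^n(p),CR(f))\to0$ together with (D2), uniform continuity of $f$, and the positive separation between distinct components.) With this in hand, proximality of $(x,y)$ immediately forces $A_x=A_y=:A$. Then along \emph{any} sequence $n_i\to\infty$ with $(f^{n_i}(x),f^{n_i}(y))\to(z,w)$, pass to a subsequence on which $f^{n_i}(A)$ is a constant component $B$ (there are only finitely many), and conclude $\{z,w\}\subset B$. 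No separate proximal subsequence, and no bookkeeping between times $n_k$ and $m_k$, is needed.
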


\begin{proof}
Given any $\delta>0$, consider the $\delta$-cyclic decomposition $\mathcal{D}(\delta)$ of $CR(f)$. For any $p\in X$, note that $\lim_{n\to\infty}d(f^n(p),CR(f))=0$, and so by (D2), we have $\lim_{n\to\infty}$ $d(f^n(p),f^n(A_p))=0$ for some $A_p\in\mathcal{D}(\delta)$. It holds that \[\lim_{n\to\infty} d(f^n(x),f^n(A_x))=0\quad\text{and}\quad\lim_{n\to\infty} d(f^n(y),f^n(A_y))=0.\] Since $(x,y)$ is proximal, again by (D2), $A_x$ and $A_y$ should be equal. Put $A=A_x=A_y$. Then, because $(z,w)\in\omega((x,y),f\times f)$, there are $B\in\mathcal{D}(\delta)$ and a sequence of integers $0<n_1<n_2<\cdots$ such that $f^{n_i}(A)=B$ for all $i\ge1$, and $\lim_{i\to\infty}(f^{n_i}(x),f^{n_i}(y))=(z,w)$.  These properties give $\{z,w\}\subset B$, and since $\delta>0$ is arbitrary, by Lemma 3.2, we conclude $z\sim w$.
\end{proof}

\begin{prop}
Let $f:X\to X$ be a continuous map. If $(x,y)\in DC1(X,f)$, then there exists $(z,w)\in\omega((x,y),f\times f)\subset CR(f)^2$ with $z\sim w$ and property*.
\end{prop}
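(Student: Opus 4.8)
The plan is to distill, from the two conditions in the definition of a DC1-pair, a distal pair lying inside $\omega((x,y),f\times f)$, and then to feed it into Remark 4.1(2) and Lemma A.1. First, note that $(x,y)$ is automatically a proximal pair for $f$: by the first condition in the definition of a DC1-pair, for every $\delta>0$ the set $\{i\ge 0:d(f^i(x),f^i(y))<\delta\}$ has upper density $1$ and is in particular infinite, so $\liminf_{n\to\infty}d(f^n(x),f^n(y))=0$. Hence, by Lemma A.1, every pair in $\omega((x,y),f\times f)\subset CR(f)^2$ already satisfies the chain relation $\sim$, and it suffices to exhibit one pair in this $\omega$-limit set that in addition has property*.

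The key step uses the second condition, which is where the full strength of DC1 enters. Write $F=f\times f$ and $A=\{i\ge 0:d(f^i(x),f^i(y))>\delta_0\}$, so $A$ has upper density $1$. A routine counting argument shows that $\mathbb{N}_0\setminus A$ then has lower density $0$, hence is not syndetic, hence for every $L$ and every $N$ there is $i\ge N$ with $\{i,i+1,\dots,i+L\}\subset A$. (Alternatively: take a weak-$*$ subsequential limit $\mu$ of $\frac{1}{n}\sum_{i=0}^{n-1}\delta_{F^i(x,y)}$ along a subsequence realizing the $\limsup=1$; then $\mu$ is $F$-invariant, $\operatorname{supp}\mu\subset\omega((x,y),F)$, and $\mu(\{(z,w):d(z,w)\ge\delta_0\})=1$, so $\operatorname{supp}\mu$ misses the diagonal.) Using the counting description, pick $i_m\ge m$ with $\{i_m,\dots,i_m+m\}\subset A$, pass to a subsequence along which $F^{i_m}(x,y)\to(z_0,w_0)$, and note $(z_0,w_0)\in\omega((x,y),F)$ because $i_m\to\infty$. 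For each fixed $j\ge 0$ we have $d(f^{i_m+j}(x),f^{i_m+j}(y))>\delta_0$ for all $m\ge j$, so letting $m\to\infty$ and using continuity of $F^j$ gives $d(f^j(z_0),f^j(w_0))\ge\delta_0$; thus $\inf_{j\ge 0}d(f^j(z_0),f^j(w_0))\ge\delta_0>0$, i.e., $(z_0,w_0)$ is a distal pair for $f$.

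It remains to assemble the pieces. Since $(z_0,w_0)$ is distal, Remark 4.1(2) applies to it and gives that every pair in the (nonempty) set $\omega((z_0,w_0),F)\subset CR(f)^2$ has property*. Since $(z_0,w_0)\in\omega((x,y),F)$ and $\omega((x,y),F)$ is closed and forward $F$-invariant, one has $\omega((z_0,w_0),F)\subset\omega((x,y),F)$; hence any $(z,w)\in\omega((z_0,w_0),F)$ is a pair in $\omega((x,y),f\times f)\subset CR(f)^2$ which has property* and, by the first paragraph and Lemma A.1, satisfies $z\sim w$. This is exactly the pair asked for in the statement.

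The main obstacle is the middle step: turning the asymptotic-frequency hypothesis (the second condition in the definition of a DC1-pair) into a genuinely distal pair inside $\omega((x,y),f\times f)$. Only the density-$1$ assumption forces arbitrarily long runs of ``far apart'' times (equivalently, an $F$-invariant measure supported off the diagonal); under a merely positive $\limsup$, as in Li--Yorke chaos, the $\omega$-limit set can contain no distal pair at all, so the argument really does require DC1 rather than a weaker form of chaos.
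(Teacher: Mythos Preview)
Your proof is correct and follows essentially the same route as the paper's: both observe that $(x,y)$ is proximal, extract from the density-$1$ condition a thick set of ``far apart'' times, use it to find a distal pair $(z_0,w_0)$ (the paper's $(p,q)$) inside $\omega((x,y),f\times f)$, and then pass to $\omega((z_0,w_0),f\times f)$ via Remark~4.1 and Lemma~A.1 to obtain a pair with both $\sim$ and property*. The only cosmetic difference is that you invoke Lemma~A.1 once at the level of $\omega((x,y),f\times f)$ to get $\sim$ for everything there, whereas the paper first gets $p\sim q$ from Lemma~A.1 and then pushes $\sim$ down to $(z,w)$ via Remark~4.1(3); the content is identical.
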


\begin{proof}
Let $(x,y)\in DC1(X,f)$. Then, $(x,y)$ is clearly a proximal pair for $f$. From the definition of DC1-pairs, it also follows that there is $\delta_0>0$ for which \[T=\{n\ge0:d(f^n(x),f^n(y))>\delta_0\}\] is a so-called {\em thick set}, meaning that for any $n>0$, there is $j\ge0$ such that $\{j,j+1,\dots, j+n\}\subset T$. Let us choose two sequences of integers $0<n_1<n_2<\cdots$, $0<N_1<N_2<\cdots$, and $(p,q)\in X^2$ such that the following holds.
\vspace{0.3mm}
\begin{itemize}
\item[(1)] For any $j\ge1$ and any $n_j\le n\le n_j+N_j$, $d(f^n(x),f^n(y))>\delta_0$.
\vspace{0.5mm}
\item[(2)] $\lim_{j\to\infty}(f^{n_j}(x),f^{n_j}(y))=(p,q)$.
\end{itemize}
\vspace{0.3mm}
Then, $d(f^n(p),f^n(q))\ge\delta_0$ for all $n\ge0$; therefore, $(p,q)\in\omega((x,y),f\times f)$ is distal. By Lemma A.1, it also holds that $p\sim q$. Take $(z,w)\in\omega((p,q),f\times f)\subset\omega((x,y),f\times f)$. Then, as Remark 4.1, $z\sim w$ and property* hold.
\end{proof}

We end with a simple application of Lemma A.1. A pair of points $(x,y)\in X^2$ is said to be a {\em Li-Yorke pair} for $f$ if \[\liminf_{n\to\infty}d(f^n(x),f^n(y))=0\quad\text{and}\quad\limsup_{n\to\infty}d(f^n(x),f^n(y))>0.\] By Lemma A.1, a similar argument as the proof of Proposition A.1 shows that for any Li-Yorke pair $(x,y)$ for $f$, there is $(z,w)\in\omega((x,y),f\times f)\subset CR(f)^2$ such that $z\ne w$ and $z\sim w$. When $f|_{CR(f)}:CR(f)\to CR(f)$ has the shadowing property, due to Corollary 4.1, the existence of such a $(z,w)$ implies $h_{top}(f)>0$. Thus, by Lemma 4.2, the assumption (2) of Theorem 1.2 holds if $f$ has the limit shadowing property and has a Li-Yorke pair. Also, it follows that a continuous map $f$ with the limit shadowing property exhibits DC1 only if $h_{top}(f)>0$ because any DC1-pair is a Li-Yorke pair.

\end{document}